\author{H. Egger}
\address{Department of Mathematics, TU Darmstadt, Germany}
\email{egger@mathematik.tu-darmstadt.de}
\title[Variational discretization of non-isothermal compressible flow]{A mixed variational discretization for non-isothermal compressible flow in pipelines}
\newtheorem{lemma}{Lemma}[section]
\newtheorem{problem}[lemma]{Problem}
\newtheorem{assumption}[lemma]{Assumption}
\theoremstyle{definition}
\newtheorem{remark}[lemma]{Remark}
\newtheorem{notation}[lemma]{Notation}
\newtheorem{example}{Example}[section]
\newtheorem*{example*}{Example}
\def\div{\mathrm{div}}
\def\dt{\partial_t}
\def\dtau{\bar\partial_\tau}
\def\dx{\partial_x}
\def\dxx{\partial_{xx}}
\def\RR{\mathbb{R}}
\def\E{\mathcal{E}}
\numberwithin{equation}{section}
\numberwithin{table}{section}
\numberwithin{figure}{section}
\begin{document}

\begin{abstract} 
We consider the non-isothermal flow of a compressible fluid through pipes. 
Starting from the full set of Euler equations, we propose a variational characterization of solutions that encodes the conservation of mass, energy, and entropy in a very direct manner. This variational principle is suitable for a conforming Galerkin approximation in space which automatically inherits the basic physical conservation laws. Three different spaces are used for approximation of density, mass flux, and temperature, and we consider a mixed finite element method as one possible choice of suitable approximation spaces. We also investigate the subsequent discretization in time by a problem adapted implicit time stepping scheme for which exact conservation of mass as well as a slight dissipation of energy and increase of entropy are proven which are due to the numerical dissipation of the implicit time discretization. The main arguments of our analysis are rather general and allow us to extend the approach with minor modification to more general boundary conditions and flow models taking into account friction, viscosity, heat conduction, and heat exchange with the surrounding medium. 
\end{abstract}

\maketitle

\vspace*{-1em}

\begin{quote}
\noindent 
{\small {\bf Keywords:} 
compressible flow, 
gas transport,
variational principle,
Galerkin approximation,
energy estimates,
mixed finite elements,
implicit time stepping
}
\end{quote}

\begin{quote}
\noindent
{\small {\bf AMS-classification (2000):}
35D30,35R02,37L65,76M10,76N99}
\end{quote}

\vspace*{1em}

\section{Introduction} \label{sec:intro}

We consider the systematic numerical approximation of compressible flow in pipelines. Such problems arise for instance in the simulation and optimization of gas networks \cite{BrouwerGasserHerty11,Osiadacz84}.
Motivated by such applications we tacitly assume throughout the text that the flow is subsonic.
For ease of presentation, we neglect for the moment the presence of friction, viscosity, and heat transfer, 
and therefore consider the Euler equations
\begin{align}
\dt \rho + \dx m &= 0,                     \label{eq:euler1}\\
\dt m + \dx \left(\frac{m^2}{\rho} + p\right) &= 0,   \label{eq:euler2}\\
\dt E + \dx \left(\frac{m}{\rho}(E+p)\right) &= 0.   \label{eq:euler3}
\end{align}
More general flow models will be considered below. 
Here $\rho$ denotes the density, $m$ the mass flux, $p$ the pressure, and $E = \frac{m^2}{2\rho} + \rho e$ the total energy; further $\frac{m^2}{2\rho}$ is the kinetic energy, and $e$ the internal energy.
The above equations are assumed to hold on a bounded closed interval $\omega$ representing the pipeline
and for all $t>0$. We further assume for the moment that the pipe is closed, i.e., 
\begin{align}
m&=0 \qquad \text{on the boundary}. \label{eq:euler4}
\end{align}
It is well-known that for smooth solutions of the Euler equations \eqref{eq:euler1}--\eqref{eq:euler3}, 
one can also deduce a conservation law for the entropy \cite{Feireisl03,NovotnyStraskraba04}, namely 
\begin{align} \label{eq:euler5}
\dt (\rho s) + \dx (m s) &= 0.
\end{align}
One may replace one of the equations in \eqref{eq:euler1}--\eqref{eq:euler3} by the entropy equation \eqref{eq:euler5} and thus obtain an equivalent system for the unknown density $\rho$, mass flux $m$, and temperature $\theta$. 
This viewpoint will play an important role in our considerations below.

The system \eqref{eq:euler1}--\eqref{eq:euler5} is complemented by \emph{equations of state}. 
Here we choose to characterize the pressure $p$, the specific internal energy $e$, and the specific entropy $s$ 
as functions of density $\rho$ and temperature $\theta$, i.e., 
\begin{align} \label{eq:euler6}
p=p(\rho,\theta), 
\qquad e=e(\rho,\theta),
\qquad \text{and} \qquad 
s=s(\rho,\theta).
\end{align}
In order to comply to the basic laws of thermodynamics, these functions have to satisfy certain compatibility conditions; see \cite{Batchelor,Feireisl03} and Section~\ref{sec:prelim} below.

\medskip 

Under appropriate assumptions on the initial data and the constitutive relations, the local existence of smooth solutions to the system \eqref{eq:euler1}--\eqref{eq:euler6} can be guaranteed \cite{Feireisl03,NovotnyStraskraba04}. 
In the context of gas pipelines, solutions are expected to remain smooth for all time, which can be explained by the stabilizing effect of friction \cite{MarcatiMilani90,MarcatiRubino00}. Moreover, the flow takes place at low Mach number and therefore no shocks should be generated. 
\begin{notation}
A triple $(\rho,m,\theta) \in C^1((0,T] \times \omega)^3$ is called \emph{smooth positive solution} of the system \eqref{eq:euler1}--\eqref{eq:euler6} above, if all equations hold in a point wise sense and $\rho,\theta > 0$. 
\end{notation}

Due to the many important applications, a vast amount of literature has been devoted to the study of numerical methods for compressible flow problems, and the one dimensional problem discussed above is typically used as a starting point.
Despite that fact, the convergence analysis for the flow of inviscid fluids is not completely settled, not even in one space dimension.
Finite volume methods are probably most widely used for the numerical approximation of compressible flow. 
While a rather complete convergence theory has been established for scalar conservation laws, only partial results are available concerning the analysis of finite volume methods for the Euler equations; we refer to \cite{Kroener,LeVeque02} and the references given there. 
Similar results hold for discontinuous Galerkin methods \cite{CockburnShu89c,CockburnShu89b} 
which can be understood as high-order generalization of finite volume schemes.
Using the stabilizing effect of viscosity, some truly implementable numerical schemes for the isothermal compressible Navier-Stokes equations have been shown to be globally convergent to weak solutions \cite{ZarnowskiHoff91,ZhaoHoff94,ZhaoHoff97}. These are however formulated in Lagrangian coordinates which is prohibitive for a possible extension to pipe networks.
In \cite{GallouetEtAl16,Karper13}, a globally convergent non-conforming finite element method for the isothermal compressible Navier-Stokes equations in Eulerian coordinates has been proposed and analyzed. This method is based on the approximation of the velocity and density field and makes use of several stabilization terms which 
leads to a rather strong violation of the conservation laws. Moreover, the method 
degenerates in the inviscid limit; see \cite[Sec.~3.1]{Karper13}.

The main goal of the current manuscript is to construct a numerical scheme that can handle general flow models for viscous and inviscid fluids. Moreover, the method should have good stability properties and preserve the basic conservation laws that encode the underlying the physical principles as good as possible. For this purpose, we consider here an extension of our previous work \cite{Egger16}, which dealt with the conforming Galerkin approximation of isentropic flow on networks. 
The extension to the full set of Euler equations and generalization thereof, which are the subject of this paper, will require some non-trivial extensions. 
In contrast to the isentropic setting considered in \cite{Egger16}, we are not yet able to systematically handle pipe networks
and such extension are therefore left as a topic for future research. 
One obstacle here is the formulation of appropriate coupling conditions at pipe junctions. In contrast to the isentropic case \cite{Egger16,MorinReigstad15,MorinReigstad15}, this issue seems not completely settled in the general case; we refer to \cite{BandaHertyKlar06,BressanEtAl15,ColomboMauri08,Garavello10,Herty08} for positive as well as negative examples.  

\medskip

The remainder of the manuscript is organized as follows:
In the first part of the paper, we discuss in detail the Euler equations on a closed pipe. 
We first recall some basic relations of equilibrium thermodynamics, and then introduce a variational characterization of smooth solutions which rather directly encodes the conservation of mass, energy, and entropy. 
In the second part, we investigate the numerical approximation of this variational principle in space by a conforming Galerkin method and then consider the subsequent discretization in time by a problem adapted implicit time stepping scheme. We prove strict conservation of mass, energy, and entropy for the semi-discretization and  
a slight dissipation of energy and monotonic increase in entropy for the fully discrete scheme. The proposed method therefore perfectly complies to the basic principles of thermodynamics.
In the third part of the paper, we investigate the extension of our approach to more general flow models.
The last part of the paper is devoted to numerical tests which illustrate the theoretical results and demonstrate the stability and conservation properties of the fully discrete schemes. Our research is motivated mainly by gas transport in pipelines, which takes place at low Mach number and therefore avoids the generation of shocks. We however also consider a shock tube problem to demonstrate the correct handling of shocks, rarefaction waves, and contact discontinuities that may arise in more general applications.

\section*{Part I: Analysis on the continuous level}

In the following three sections, we first review some basic relations of equilibrium thermodynamics and then present and analyze a particular variational principle for the Euler equations which will serve as the basis for our further considerations.

\section{Auxiliary results} \label{sec:prelim}

Let us first consider in a bit more detail the relations of pressure, internal energy, and entropy.
It is well known, see e.g. \cite{Batchelor,CourantFriedrichs48,Feireisl03}, that for thermodynamical consistency, the pressure $p$ has to be related to the specific internal energy $e$ by
\begin{align} \label{eq:erho}
e_\rho = \frac{1}{\rho^2} (p - \theta p_\theta).
\end{align}
Subscripts denote partial derivatives and functions may in general depend on $\rho$ and $\theta$. 
By integration with respect to $\rho$, we can then express the \emph{internal energy} as 
\begin{align} \label{eq:e}
e(\rho,\theta) = P(\rho,\theta) - \theta P_\theta(\rho,\theta) + Q(\theta),
\end{align}
Here $P$ denotes a \emph{pressure potential} defined by 
\begin{align} \label{eq:P}
P(\rho,\theta) = \int_1^\rho \frac{p(r,\theta)}{r^2} dr,
\end{align}
and accordingly we call $Q(\theta)$ the \emph{thermal potential} which is independent of $\rho$. 
The two potentials allow us to rewrite various other terms that arise in our analysis later on in a common form. 
The spatial derivative of the pressure, for instance, can be expressed as
\begin{align} \label{eq:dpdx}
\frac{1}{\rho} \dx p 
&= \frac{1}{\rho} \dx (\rho^2 P_\rho) 
 = \dx \big((\rho P)_\rho \big) - P_\theta \dx \theta.
\end{align}
Another important quantity is the \emph{specific enthalpy} $h$, which is given by 
\begin{align} \label{eq:h}
h=e + \frac{p}{\rho} 
&= P - \theta P_\theta + Q + \rho P_\rho 
 = (\rho P)_\rho - \theta P_\theta + Q. 
\end{align}
The temporal change of the internal energy can then be expanded as 
\begin{align} \label{eq:drhoedt}
\dt(\rho e) 
&= h \dt \rho + (e-h) \dt \rho + \rho \dt e 
 = h \dt \rho + \rho \dt e - \frac{p}{\rho} \dt \rho. 
%  = h \dt \rho + \rho \theta \dt s.
\end{align}
The last two terms of this splitting turn out to be related to the \emph{specific entropy} $s$, 
which is again determined up to a constant by thermodynamic relations \cite{Batchelor,Feireisl03}, namely
\begin{align} \label{eq:srho}
\theta s_\rho = e_\rho - \frac{p}{\rho^2}
\qquad \text{and} \qquad
\theta s_\theta = e_\theta.  
\end{align}
A suitable entropy can then be found by integration and reads 
\begin{align} \label{eq:s}
s(\rho,\theta) = \int_1^\theta \frac{Q_\theta(t)}{t} dt - P_\theta(\rho,\theta). 
\end{align}
The time derivative of this entropy can again be expressed via the potentials as
\begin{align} \label{eq:dsdt}
\rho \theta \dt s
&= -\rho \theta P_{\theta\rho} \dt \rho + \rho (Q_\theta - \theta P_{\theta\theta}) \dt \theta \\
&=  \rho (P_\rho - \theta P_{\theta\rho})\dt \rho + \rho (P_\theta - P_\theta - \theta P_{\theta\theta} + Q_\theta) \dt \theta - \rho P_\rho \dt \rho 
 =\rho \dt e - \frac{p}{\rho} \dt \rho. \qquad  \notag
\end{align}
Note that the last term already appeared in formula \eqref{eq:drhoedt} for the time derivative of the internal energy.
The spatial derivative of the entropy can finally be expressed as
\begin{align} \label{eq:dsdx}
\theta \dx s 
&= \theta (s_\rho \dx \rho + s_\theta \dx \theta) 
 = \theta s_\rho \dx \rho + \theta s_\theta \dx \theta \\
&= -\theta P_{\theta\rho} \dx \rho + (P_\theta - (\theta P_\theta)_\theta + Q_\theta) \dx \theta 
 = \dx (Q - \theta P_\theta) + P_\theta \dx \theta. \notag
\end{align}
A combination of these two formulas and the Euler equations \eqref{eq:euler1}--\eqref{eq:euler3} shows, see Remark~\ref{rem:entropy} for details,
that the evolution of the entropy 
is governed by 
\begin{align} \label{eq:entropy}
\rho \dt s + m \dx s &= 0.
\end{align}
Together with \eqref{eq:euler1} this yields the conservation law \eqref{eq:euler5} for the entropy.
\begin{remark}
To completely describe the constitutive relations for the fluid under investigation, it is thus sufficient to prescribe the two potentials $P(\rho,\theta)$ and $Q(\theta)$. 
Pressure $p$, internal energy $e$, entropy $s$, and derivatives of these quantities can then be expressed in terms of these potentials. 
This will substantially simplify our analysis later on.
\end{remark}

Following physical intuition and to guarantee the well-posedness of the problem under investigation, we make the following structural assumptions about the potentials.

\begin{assumption}
Let $P\in C^3(\RR_+^2)$ and $Q \in C^2(\RR_+)$ and assume that $P_\rho \ge 0$, $(\rho P_\rho)_\rho \ge 0$, and $Q_\theta - \theta P_{\theta\theta} \ge \underline c_v $ for all $\rho,\theta>0$ and with some constant $\underline c_v>0$.
\end{assumption}
These assumptions will tacitly be utilized at several places in our analysis below and they are therefore assumed to hold for the rest of the manuscript.
To show that they are reasonable, let us briefly discuss a particular example that has been discussed in literature.
\begin{example}[Admissible state equations]
Let us define 
\begin{align*}
P(\rho,\theta) = \frac{c_\gamma}{\gamma-1} \rho^{\gamma-1}  + C(\rho) \theta + c' 
\qquad \text{and} \qquad Q(\theta) = \int_1^\theta c_v(t) dt + c''.
\end{align*}
with $c_\gamma \ge 0$, $\gamma > 1$, and such that $c(\rho)=\rho^2 C_\rho \ge 0$, 
$(\rho C_{\rho})_\rho \ge 0$, and $c_v(\theta) \ge \underline c_v > 0$. 
The constants $c',c''$ do not play a significant role in the following. 
Via \eqref{eq:e} and \eqref{eq:P}, we 
can then express the pressure and internal energy as
\begin{align*}
p(\rho,\theta) = c_\gamma \rho^\gamma + c(\rho) \theta
\qquad \text{and} \qquad 
e(\rho,\theta) = \frac{c_\gamma}{\gamma-1} \rho^\gamma + Q(\theta) + c'.
\end{align*}
This yields an extension of the state equations for an ideal gas that has already been considered in \cite{Feireisl03}. 
From the definition of the pressure potential, we can further see that
\begin{align*}
P_{\rho}(\rho,\theta) = c_\gamma \rho^{\gamma-2} + C_\rho(\rho) \theta  \ge 0  \qquad \text{for all } \rho,\theta>0
\end{align*}
and 
\begin{align*}
(\rho P_\rho)_\rho 
=  c_\gamma (\gamma-1) \rho^{\gamma-2} + (\rho C_\rho)_\rho \ge 0 \qquad \text{for all } \rho,\theta>0.
\end{align*}
In addition, we obtain
\begin{align*}
Q_\theta  - \theta P_{\theta\theta} = Q_\theta = c_v(\theta) \ge \underline c_v > 0 \qquad \text{for all } \theta>0.
\end{align*}
Hence all assumptions about the potentials made above are valid. Note that this simple example already covers, as special cases, the state equations of polytropic gases \cite{CourantFriedrichs48,LeVeque02} and of barotropic flow \cite{Feireisl03,NovotnyStraskraba04}. Our results are therefore directly applicable in these situations. 
\end{example}

\section{An equivalent formulation} \label{sec:equivalent}

We next present an equivalent formulation for the Euler equations which turns out to be particularly well suited for numerical approximation. Recall the definition of the total energy density $E=\frac{m^2}{2\rho} + \rho e$. 
With the formulas of the previous section, we then obtain
\begin{align} \label{eq:dEdt}
\dt E 
&= \dt \left(\frac{m^2}{2\rho^2}\right) + \dt (\rho e) \\
&= m \left(\frac{1}{\rho} \dt m - \frac{m}{2\rho^2} \dt \rho \right) + h (\dt \rho) + (\rho \dt e - \frac{p}{\rho} \dt \rho). \notag
\end{align}
Note that the last term could also be expressed in terms of the entropy by $\theta \rho \dt s$,
which will in fact be utilized below.
By the Euler equations \eqref{eq:euler1}--\eqref{eq:euler3} and the relations between the constitutive equations and the potentials derived in the previous section, the terms in parenthesis on the right hand side of \eqref{eq:dEdt} can be expressed as 
\begin{align}
\dt \rho &=- \dx m,\label{eq:sys1}\\
\frac{1}{\rho} \dt m - \frac{m}{2\rho^2} \dt \rho &=- \dx (\frac{m^2}{2\rho^2} + (\rho P)_\rho) - \frac{m}{2\rho^2} \dx m  + P_\theta \dx \theta, \label{eq:sys2}\\
\rho \dt e - \frac{p}{\rho} \dt \rho  &= - m \dx (Q - \theta P_\theta) - m P_\theta \dx \theta. \label{eq:sys3}
\end{align}
These equations provide an equivalent formulation for the problem under investigation.
\begin{lemma}[Equivalence] \label{lem:equivalence}
Let the state equations be defined as in Section~\ref{sec:prelim}. 
Then any smooth positive solution $(\rho,m,\theta)$ of the Euler equations \eqref{eq:euler1}--\eqref{eq:euler3} also solves \eqref{eq:sys1}--\eqref{eq:sys3}, and vice versa. The two systems are equivalent in this sense.
\end{lemma}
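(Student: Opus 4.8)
The plan is to verify the equivalence equation by equation, noting that at each step we only carry out reversible algebraic manipulations, which are legitimate because $\rho>0$ and $(\rho,m,\theta)$ is $C^1$ (and the potentials are smooth enough by the Assumption). Equation \eqref{eq:euler1} is just a rearrangement of \eqref{eq:sys1}, so these two are trivially equivalent. Assuming now \eqref{eq:euler1} (equivalently \eqref{eq:sys1}), I would show that \eqref{eq:euler2} is equivalent to \eqref{eq:sys2}: dividing \eqref{eq:euler2} by $\rho$, the convective term is rewritten by the elementary identity $\tfrac1\rho\dx(m^2/\rho)=\dx\bigl(\tfrac{m^2}{2\rho^2}\bigr)+\tfrac{m}{\rho^2}\dx m$ and the pressure term by \eqref{eq:dpdx}; splitting $\tfrac{m}{\rho^2}\dx m=\tfrac{m}{2\rho^2}\dx m-\tfrac{m}{2\rho^2}\dt\rho$ with the help of \eqref{eq:sys1} then produces exactly \eqref{eq:sys2}, and every one of these equalities can also be read backwards.

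Finally, assuming \eqref{eq:euler1} and \eqref{eq:euler2} (equivalently \eqref{eq:sys1} and \eqref{eq:sys2}), I would derive the equivalence of \eqref{eq:euler3} and \eqref{eq:sys3}. The starting point is the purely algebraic identity \eqref{eq:dEdt} for $\dt E$, together with the decomposition $\tfrac{m}{\rho}(E+p)=\tfrac{m^3}{2\rho^2}+mh$ of the energy flux, which follows from $E+p=\tfrac{m^2}{2\rho}+\rho h$. Inserting \eqref{eq:sys1} and \eqref{eq:sys2} into the right-hand side of \eqref{eq:dEdt} and expanding the flux, the contributions $h\dx m$ as well as the purely kinetic flux terms $\dx\bigl(\tfrac{m^3}{2\rho^2}\bigr)$ and $m\dx\bigl(\tfrac{m^2}{2\rho^2}\bigr)$ cancel, and using $h=(\rho P)_\rho-\theta P_\theta+Q$ from \eqref{eq:h}, what remains of \eqref{eq:euler3} reduces precisely to $\rho\dt e-\tfrac{p}{\rho}\dt\rho=-m\dx(Q-\theta P_\theta)-mP_\theta\dx\theta$, i.e., to \eqref{eq:sys3}. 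Running the same chain of identities backwards recovers \eqref{eq:euler3} from \eqref{eq:sys1}--\eqref{eq:sys3}, which completes the argument.

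I expect this last step to be the only part requiring genuine care, the main point being the bookkeeping of the kinetic-energy flux terms and the correct use of the potential identities \eqref{eq:dpdx} and \eqref{eq:h}; no essential difficulty is hidden here, since positivity of $\rho$ and $C^1$ regularity make all the divisions and differentiations admissible and all the steps reversible.
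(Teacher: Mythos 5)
Your proposal is correct and follows essentially the same route as the paper: identifying \eqref{eq:euler1} with \eqref{eq:sys1}, rewriting \eqref{eq:euler2} via the kinetic identity and \eqref{eq:dpdx} to obtain \eqref{eq:sys2}, and then using \eqref{eq:dEdt} together with the enthalpy decomposition of the energy flux and \eqref{eq:h} to reduce \eqref{eq:euler3} to \eqref{eq:sys3}, with reversibility of each step giving the converse. The minor reorganization in the momentum step (using \eqref{eq:sys1} to split $\tfrac{m}{\rho^2}\dx m$ rather than substituting $\dt\rho$ first) is immaterial.
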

\begin{proof}
The first equation \eqref{eq:sys1} is the same as \eqref{eq:euler1}. \\
Next consider \eqref{eq:sys2}: From equations \eqref{eq:euler1}--\eqref{eq:euler2}, we can deduce that
\begin{align*}
\frac{1}{\rho} \dt m - \frac{m}{2\rho^2} \dt \rho
&= - \frac{1}{\rho} \dx \left(\frac{m^2}{\rho} + p\right) + \frac{m}{2\rho^2} \dx m = (i) + (ii) + (iii).
\end{align*}
The first term can be expanded as
\begin{align*}
-\frac{1}{\rho} \dx \left(\frac{m^2}{\rho}\right) 
&= -\frac{m}{\rho}\dx \left(\frac{m}{\rho}\right) - \frac{m}{\rho^2} \dx m
 = -\dx \left(\frac{m^2}{2\rho^2}\right) - \frac{m}{\rho^2} \dx m.
\end{align*}
The second term (ii) can be replaced by the expression for $\frac{1}{\rho} \dx p$ derived in \eqref{eq:dpdx}. 
Adding up all three terms then directly yields the second equation \eqref{eq:sys2}.\\
Now consider the third identity \eqref{eq:sys3}: From equation \eqref{eq:dEdt}, we immediately obtain
\begin{align*}
\rho \dt e - \frac{p}{\rho} \dt \rho 
% &= \dt E - \dt \left(\frac{m^2}{2\rho^2}\right) - \left(e+\frac{p}{\rho}\right) \dt \rho\\
&= \dt E - m \left(\frac{1}{\rho} \dt m - \frac{m}{2\rho^2} \dt \rho \right) - h \dt \rho = (*).
\end{align*}
We can now use \eqref{eq:euler3}, \eqref{eq:sys2}, and \eqref{eq:sys1} to replace the time derivative terms,
which yields
\begin{align*}
(*) &= - \dx \left( \frac{m}{\rho} (E+p)\right) + m \left( \dx \left( \frac{m^2}{2\rho^2} + (\rho P)_\rho \right) + \frac{m}{2\rho^2} \dx m - P_\theta \dx \theta \right) + h \dx m.
\end{align*}
Using that $\frac{E+p}{\rho}=\frac{m^2}{2\rho^2} + h$ and $h = (\rho P)_\rho - \theta P_\theta + Q$, we can expand the first term as 
\begin{align*}
- \dx \left( \frac{m}{\rho} (E+p)\right)
&= - m \dx \left(\frac{m^2}{2\rho^2}\right) -\dx m \left(\frac{m^2}{2\rho^2}\right)  - m \dx \big( (\rho P)_\rho - \theta P_\theta + Q \big) - h \dx m.
\end{align*}
A combination with the remaining terms then yields the third identity  \eqref{eq:sys3}. 
Reversing the individual steps yields the other direction and completes the proof of the lemma.
\end{proof}

\begin{remark} \label{rem:entropy}
Using the formulas \eqref{eq:dsdt} and \eqref{eq:dsdx} for the derivatives of the entropy, the third equation \eqref{eq:sys3} could also be expressed as 
\begin{align*}
\rho \theta \dt s = - m \theta \dx s.
\end{align*}
Since $\theta>0$ for any smooth positive solution, 
this is equivalent to \eqref{eq:entropy} and together with equation \eqref{eq:euler1} 
leads to the conservation law \eqref{eq:euler5} for the entropy.
The above three equations \eqref{eq:sys1}--\eqref{eq:sys3} thus actually describe the conservation of mass 
and the evolution of kinetic energy and of entropy. The unknown fields here are the density $\rho$, 
the mass flux $m$, and the temperature $\theta$. This is the basic framework for our further considerations.
\end{remark}

\section{A variational principle} \label{sec:var}

The equivalent formulation \eqref{eq:sys1}--\eqref{eq:sys3} allows us to establish the following variational characterization of solutions to the Euler equations which will be the starting point and main ingredient for the construction and analysis of numerical approximations later on.
\begin{lemma}[Variational characterization] $ $ \label{lem:variational}
Let the state equations be given as in Section~\ref{sec:prelim}. 
Then any smooth positive solution $(\rho,m,\theta)$ of \eqref{eq:euler1}--\eqref{eq:euler4}
also solves
\begin{align}
(\dt \rho,q) &+ (\dx m,q) = 0,  \label{eq:var1}\\
\left(\frac{1}{\rho} \dt \rho - \frac{m}{2\rho^2} \dt \rho,v\right) &- \left(\frac{m^2}{2\rho^2} + (\rho P)_\rho,\dx v\right) + \left(\frac{m}{2\rho^2} \dx m - P_\theta \dx \theta, v\right) = 0, \label{eq:var2}\\
\left(\rho \dt e - \frac{p}{\rho} \dt \rho,\frac{w}{\theta}\right) &- \left(Q - \theta P_\theta, \dx \big(m\frac{w}{\theta}\big)\right)  + \left(m P_\theta \dx \theta,\frac{w}{\theta}\right) = 0, \label{eq:var3}
\end{align}
for all test functions $q \in L^2(\omega)$, $v \in H_0(\div;\omega)$, $w \in H^1(\omega)$, and all $t \ge 0$.
Vice versa, any smooth positive solution of \eqref{eq:var1}--\eqref{eq:var3} and \eqref{eq:euler4} also solves \eqref{eq:euler1}--\eqref{eq:euler3}.
\end{lemma}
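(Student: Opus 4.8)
The plan is to show that the variational identities \eqref{eq:var1}--\eqref{eq:var3} are, under the stated smoothness and positivity, nothing but the weak (integrated-against-test-functions) versions of the pointwise identities \eqref{eq:sys1}--\eqref{eq:sys3}, and then invoke Lemma~\ref{lem:equivalence} to pass between \eqref{eq:sys1}--\eqref{eq:sys3} and the Euler equations \eqref{eq:euler1}--\eqref{eq:euler3}. First I would handle the forward direction. Starting from a smooth positive solution of \eqref{eq:euler1}--\eqref{eq:euler4}, Lemma~\ref{lem:equivalence} gives the pointwise identities \eqref{eq:sys1}--\eqref{eq:sys3}. Multiplying \eqref{eq:sys1} by $q\in L^2(\omega)$ and integrating yields \eqref{eq:var1} immediately. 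For \eqref{eq:var2} I would multiply \eqref{eq:sys2} by $v\in H_0(\div;\omega)$, integrate over $\omega$, and integrate by parts in the term $\bigl(\dx(\tfrac{m^2}{2\rho^2}+(\rho P)_\rho),v\bigr)$; the boundary term $[(\tfrac{m^2}{2\rho^2}+(\rho P)_\rho)\,v]_{\partial\omega}$ vanishes because $v\in H_0(\div;\omega)$ has vanishing normal trace on $\partial\omega$, and this produces exactly \eqref{eq:var2}. Note here the apparent typo that $\dt m$ in \eqref{eq:sys2} is written $\dt\rho$ in \eqref{eq:var2}; I would simply read \eqref{eq:var2} with the first argument $\tfrac1\rho\dt m-\tfrac{m}{2\rho^2}\dt\rho$ as in \eqref{eq:sys2}. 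For \eqref{eq:var3}, multiply \eqref{eq:sys3} by $w/\theta$ with $w\in H^1(\omega)$ — this is where positivity of $\theta$ is used, so that $w/\theta\in H^1(\omega)$ is a legitimate multiplier — integrate, and integrate by parts in the term $\bigl(m\,\dx(Q-\theta P_\theta),w/\theta\bigr)=\bigl(\dx(Q-\theta P_\theta),m\,w/\theta\bigr)$; the boundary contribution $[(Q-\theta P_\theta)\,m\,w/\theta]_{\partial\omega}$ vanishes because of the closed-pipe condition \eqref{eq:euler4}, i.e. $m=0$ on $\partial\omega$. This gives \eqref{eq:var3}.

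For the converse, I would start from a smooth positive solution of \eqref{eq:var1}--\eqref{eq:var3} together with \eqref{eq:euler4} and run the integration by parts in reverse. Since $\rho,m,\theta$ are $C^1$, all the integrands in \eqref{eq:var2}--\eqref{eq:var3} that were moved onto the test function are themselves differentiable, so I can integrate by parts back; the boundary terms that reappear vanish for the same reasons (zero normal trace of $v$, and $m=0$ on $\partial\omega$). Thus \eqref{eq:var1}--\eqref{eq:var3} become $(\text{\eqref{eq:sys1}},q)=0$ for all $q\in L^2(\omega)$, $(\text{\eqref{eq:sys2}},v)=0$ for all $v\in H_0(\div;\omega)$, and $(\text{\eqref{eq:sys3}}\cdot\tfrac1\theta,w)=0$ for all $w\in H^1(\omega)$. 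By the fundamental lemma of the calculus of variations — using that $C_c^\infty(\omega)$ is dense in $L^2(\omega)$ and is contained in each of the test spaces — the three pointwise identities \eqref{eq:sys1}--\eqref{eq:sys3} follow (for the third, multiply through by the positive factor $\theta$). Lemma~\ref{lem:equivalence} then yields \eqref{eq:euler1}--\eqref{eq:euler3}.

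The genuinely delicate point, and the one I would state explicitly, is the treatment of the boundary terms in the two integration-by-parts steps, since this is where the choice of function spaces $H_0(\div;\omega)$ and $H^1(\omega)$ and the boundary condition \eqref{eq:euler4} enter in an essential way: the vanishing normal trace of $v$ kills the momentum boundary flux, and $m|_{\partial\omega}=0$ kills the energy boundary flux even though $w$ (and hence $w/\theta$) need not vanish on $\partial\omega$. A secondary but worth-noting subtlety is that in the converse direction one must first test \eqref{eq:var2} and \eqref{eq:var3} only with compactly supported smooth $v$ and $w$ to recover the interior PDEs, and only afterwards use general $v\in H_0(\div;\omega)$ and $w\in H^1(\omega)$ to read off that the remaining boundary flux vanishes — but since here the boundary condition $m=0$ is imposed separately via \eqref{eq:euler4}, no additional natural boundary condition needs to be extracted, and the argument closes cleanly.
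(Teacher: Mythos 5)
Your proposal is correct and follows essentially the same route as the paper's proof: multiply \eqref{eq:sys1}--\eqref{eq:sys3} by the test functions, integrate, integrate by parts with the boundary terms killed by the vanishing normal trace of $v$ and by $m=0$ from \eqref{eq:euler4}, and reverse the steps (via density of smooth compactly supported functions) for the converse, then invoke Lemma~\ref{lem:equivalence}. You also correctly identify the typo in \eqref{eq:var2}, where $\frac{1}{\rho}\dt\rho$ should read $\frac{1}{\rho}\dt m$ as in \eqref{eq:sys2}.
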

\begin{notation}
Here $(u,v) = \int_\omega u v dx$ denotes the standard scalar product of $L^2(\omega)$ and $H^1(\omega) = \{\theta \in L^2(\omega) : \dx \theta \in L^2(\omega)\}$ is the usual Sobolev space. In addition, we denote by $H_0(\div;\omega)=\{m \in L^2(\omega) : \dx m \in L^2(\omega) \text{ with } m=0 \text{ on } \partial\omega\}$ the space of smooth flux functions that additionally vanish at the boundary. 
Note that on networks \cite{Egger16} or in multiple dimensions, $H(\div)  \neq H^1$. 
We therefore use this specific notation already here. 
\end{notation}
\begin{proof}[Proof of Lemma~\ref{lem:variational}]
The assertion follows directly from the equivalence of the Euler equations
with \eqref{eq:sys1}--\eqref{eq:sys3}, by multiplying these latter set of equations with appropriate test functions, integration over the domain, and integration-by-parts for some of the terms. Note that all boundary terms vanish due to the homogeneous boundary conditions for $m$ and $v$. This shows that any solution of the Euler equations satisfies the above variational principle. The other direction follows by reversing the individual steps.
\end{proof}

From the previous two lemmas and the physical principles underlying the Euler equations, one can now immediately deduce global conservation laws for mass, energy, and entropy for all smooth positive solutions of the above variational principle.

\begin{lemma}[Global conservation] \label{lem:conservation} $ $\\
Let $(\rho,m,\theta)$ be a smooth positive solution of \eqref{eq:var1}--\eqref{eq:var3} and \eqref{eq:euler4}.
Then 
\begin{align*}
\frac{d}{dt} \int_\omega \rho dx = 0, 
\qquad  \frac{d}{dt} \int_\omega E dx =0,
\quad \text{and} \quad 
\frac{d}{dt} \int_\omega \rho s dx = 0.
\end{align*}
\end{lemma}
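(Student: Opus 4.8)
The plan is to reduce the three assertions to the Euler equations written in divergence form and then to integrate over $\omega$, exploiting the no-flux condition \eqref{eq:euler4}. By Lemma~\ref{lem:variational}, any smooth positive solution $(\rho,m,\theta)$ of \eqref{eq:var1}--\eqref{eq:var3} together with \eqref{eq:euler4} also solves the Euler system \eqref{eq:euler1}--\eqref{eq:euler3}; by Lemma~\ref{lem:equivalence} and the computations of Section~\ref{sec:prelim} (cf.\ Remark~\ref{rem:entropy}) it then also satisfies the entropy conservation law \eqref{eq:euler5}. It therefore suffices to integrate \eqref{eq:euler1}, \eqref{eq:euler3} and \eqref{eq:euler5} over $\omega$ and to observe that the remaining spatial terms are exact $x$-derivatives.

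Concretely: integrating \eqref{eq:euler1} gives $\frac{d}{dt}\int_\omega \rho\,dx = -\int_\omega \dx m\,dx$, and the right-hand side is the boundary jump of $m$, which vanishes by \eqref{eq:euler4}. Integrating \eqref{eq:euler3} gives $\frac{d}{dt}\int_\omega E\,dx = -\int_\omega \dx\big(\frac{m}{\rho}(E+p)\big)\,dx$; since the flux $\frac{m}{\rho}(E+p)$ carries $m$ as a factor, its boundary values vanish by \eqref{eq:euler4} as well. The same reasoning applied to \eqref{eq:euler5}, whose flux $ms$ again contains the factor $m$, yields $\frac{d}{dt}\int_\omega \rho s\,dx = 0$. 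The interchange of $\frac{d}{dt}$ with $\int_\omega$ and the evaluation of the flux integrals are legitimate because the solution is $C^1$ and strictly positive on the bounded closed interval $\omega$.

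I would additionally record the purely variational form of this argument, since it is the one that carries over verbatim to the Galerkin semi-discretization: conservation of mass is obtained by choosing $q=1$ in \eqref{eq:var1}; conservation of energy by choosing, at each fixed time $t$, the admissible test functions $q=h\in L^2(\omega)$ in \eqref{eq:var1}, $v=m\in H_0(\div;\omega)$ in \eqref{eq:var2}, and $w=\theta\in H^1(\omega)$ in \eqref{eq:var3}, adding the three identities, and checking --- using $E=\frac{m^2}{2\rho}+\rho e$, the expansions \eqref{eq:dEdt} and \eqref{eq:drhoedt}, and $h=(\rho P)_\rho-\theta P_\theta+Q$ from \eqref{eq:h} --- that everything except $\int_\omega \dt E\,dx$ cancels; conservation of entropy follows analogously from $q=s$ in \eqref{eq:var1} and $w=1$ in \eqref{eq:var3}, using $\rho\dt e-\frac{p}{\rho}\dt\rho=\rho\theta\dt s$ together with \eqref{eq:dsdx}.

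There is no genuine obstacle here --- the substance of the statement is already carried by Lemmas~\ref{lem:equivalence} and \ref{lem:variational}. The only points needing a little care are bookkeeping, namely making sure that every boundary contribution that appears is multiplied by $m$ (or by the test function $v$) and hence drops out under \eqref{eq:euler4}, and, in the variational version, confirming that the solution-dependent test functions indeed lie in the prescribed spaces and that the algebraic cancellation is complete.
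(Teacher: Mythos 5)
Your argument is correct, but your primary route is not the one the paper takes. You first reduce everything to the Euler system: by the converse direction of Lemma~\ref{lem:variational}, the solution of \eqref{eq:var1}--\eqref{eq:var3} with \eqref{eq:euler4} satisfies \eqref{eq:euler1}--\eqref{eq:euler3}, hence also the entropy law \eqref{eq:euler5}, and then all three conservation statements follow by integrating the divergence-form equations and noting that each flux ($m$, $\frac{m}{\rho}(E+p)$, $ms$) carries the factor $m$ and vanishes on $\partial\omega$. This is perfectly valid for the continuous statement and is arguably the shortest proof. The paper, however, deliberately avoids passing back to the pointwise Euler equations: its proof uses \emph{only} the structure of the variational identities, testing \eqref{eq:var1} with $q=1$ (mass), with $q=(\rho P)_\rho-\theta P_\theta+Q=h$, $v=m$, $w=\theta$ (energy), and with $q=s$, $w=1$ (entropy), because that is the argument that transfers verbatim to the Galerkin semi-discretization in Lemma~\ref{lem:conservationh}, where the discrete solution does not satisfy the strong equations and the test space is restricted. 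Your third paragraph records exactly this variational version, with the same test-function choices and the same cancellations via \eqref{eq:dEdt}, \eqref{eq:h}, \eqref{eq:dsdt}, and \eqref{eq:dsdx}, so in substance you cover the paper's proof as well; the only caveat worth keeping in mind is that for the discrete analogue the ``shortcut'' through \eqref{eq:euler1}--\eqref{eq:euler3} is unavailable, and additionally the admissibility of the solution-dependent test functions ($h$, $s$ at each fixed $t$) must be replaced by the corresponding discrete choices, which is precisely why the paper spells out the purely variational bookkeeping.
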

\begin{proof}
We provide a detailed proof that is only based on the particular form of the variational principle 
and which carries over directly to the discrete setting considered later on.
Conservation of mass follows by testing \eqref{eq:var1} with $q=1$ which yields
\begin{align*}
\int_\omega \dt \rho dx = (\dt \rho,1) = -(\dx m,1) = -\int_\omega \dx m dx = 0,
\end{align*}
where we used the boundary conditions \eqref{eq:euler4} in the last step.
The second identity is obtained as follows: 
Using formula \eqref{eq:dEdt} for the derivative of the total energy, we get
\begin{align*}
\frac{d}{dt} \int_\omega E dx 
&= (\dt E,1 ) \\
&= \left(\frac{1}{\rho} \dt m - \frac{m}{2\rho^2} \dt \rho,m\right) + \Big(\dt \rho, (\rho P)_\rho - \theta P_\theta + Q\Big) + \left(\rho \dt e - \frac{p}{\rho} \dt \rho,1/\theta\right).
\end{align*}
Testing the variational principle with $q=(\rho P)_\rho -\theta P_\theta + Q$, $v=m$, and $w=\theta$ leads to 
\begin{align*}
 \frac{d}{dt}\int_\omega E dx 
&= \left(\frac{m^2}{2\rho^2} + (\rho P)_\rho,\dx m\right) - \left(\frac{m}{2\rho^2} \dx m,m\right) + \big(P_\theta \dx \theta,m\big) \\
&\qquad \qquad 
- \big(\dx m, (\rho P)_\rho -\theta P_\theta + Q\big) 
+ \big(Q - \theta P_\theta,\dx m\big) - \big(P_\theta \dx \theta,m\big) 
= 0.
\end{align*}
Note that all terms vanish due to the particular structure of the variational principle which is thus responsible for the conservation of energy.
In order to verify the conservation of entropy, we proceed as follows: We start by observing that
\begin{align*}
\frac{d}{dt} \int_\omega \rho s dx 
&= (s \dt \rho + \rho \dt s,1) 
 = (\dt \rho, s) + \left(\theta \rho \dt s,\frac{1}{\theta}\right) 
 = (i) + (ii).
\end{align*}
Testing the first equation in the variational principle with $q=s$ leads to
\begin{align*}
(i) 
&= -(\dx m,s) = -\Big(\dx m, \int_1^\theta \frac{Q_\theta(t)}{t} dt - P_\theta\Big) 
 = \Big(m, \frac{1}{\theta} \dx Q\Big) + \big(\dx m, P_\theta\big).
\end{align*}
In the last step, we used integration-by-parts and the boundary conditions \eqref{eq:euler4} for $m$.
The expression \eqref{eq:dsdt} for the derivative of the entropy and the third equation in the variational principle tested with $w=1$ then allows us to express the second term as
\begin{align*}
(ii) 
&= \left(\rho \dt e - \frac{p}{\rho} \dt \rho,\frac{1}{\theta}\right)  
 = \left(Q,\dx \frac{m}{\theta}\right) - \left(\theta P_\theta, \frac{1}{\theta} \dx m - m \frac{1}{\theta^2} \dx \theta\right) - \left(m P_\theta \dx \theta, \frac{1}{\theta}\right) \\
&= - \left(\frac{1}{\theta} \dx Q, m \right) - \big(P_\theta, \dx m\big). 
\end{align*}
A summation of the two terms now directly yields the conservation of entropy. 
\end{proof}

\section*{Part II: Discretization}

In the following two sections, we discuss the discretization of the variational principle for the Euler equations. We start by a Galerkin approximation in space and then study the subsequent discretization in time by an implicit time stepping scheme.

\section{Galerkin approximation} \label{sec:galerkin}

For the numerical approximation in space, 
we consider a conforming Galerkin approximation of the variational principle \eqref{eq:var1}--\eqref{eq:var3} stated above. 
For this purpose, we choose finite dimensional subspaces $Q_h \subset L^2(\omega)$, $V_h \subset H_0(\div;\omega)$, and $W_h \subset H^1(\omega)$.
The semi-discrete solution is then defined by the following discrete variational problem. 
\begin{problem}[Galerkin semi-discretization] \label{prob:semi}  $ $\\
Let $\rho_{h,0} \in Q_h$, $m_{h,0} \in V_h$, $\theta_{h,0} \in W_h$ be given.
Find $(\rho_h,m_h,\theta_h) \in C([0,T];Q_h \times V_h \times W_h)$ such that $\rho_h(0)=\rho_{h,0}$, 
$m_h(0)=m_{h,0}$, $\theta_h(0)=\theta_{h,0}$, and such that
\begin{align*}
(\dt \rho_h,q_h) &+ (\dx m_h,q_h) = 0, \\
\left(\frac{1}{\rho_h} \dt m_h - \frac{m_h}{2\rho_h^2} \dt \rho_h,v_h\right) &- \left(\frac{m_h^2}{2\rho_h^2}+ (\rho_h P_h)_\rho,\dx v_h\right) + \left(\frac{m_h}{2\rho_h^2} \dx m_h - P_{\theta,h} \dx \theta_h, v_h\right) = 0, \\
\left(\rho_h \dt e_h - \frac{p_h}{\rho_h} \dt \rho_h,\frac{w_h}{\theta_h}\right) &- \left(Q_h - \theta_h P_{\theta,h}, \dx \Big(m_h \frac{w_h}{\theta_h}\Big)\right) + \left(m_h P_{\theta,h} \dx \theta_h,\frac{w_h}{\theta_h}\right) = 0,
\end{align*}
holds for all test functions $q_h \in Q_h$, $v_h \in V_h$, $w_h \in W_h$, and for all $t \ge 0$. 
\end{problem}
Here $P_h=P(\rho_h,\theta_h)$, $P_{\theta,h}=P_\theta(\rho_h,\theta_h)$, $P_{\rho,h}=P_\rho(\rho_h,\theta_h)$, and $Q_h=Q(\theta_h)$ denote the respective functions evaluated at the discrete solutions. Also note that the solution components $\rho_h$, $m_h$, and $\theta_h$ depend on $t$, while the test functions $q_h$, $v_h$, and $w_h$ are independent of time.
We next establish the local well-posedness of this problem.
\begin{lemma} \label{lem:wellposedh}
Let $\rho_{h,0} \ge \underline \rho > 0 $, $\theta_{h,0} \ge \underline \theta > 0$, and let $P,Q$ be given as in Section~\ref{sec:prelim}.
Then Problem~\ref{prob:semi} admits a unique local solution $(\rho_h,m_h,\theta_h)$ on $[0,T]$ for some $T>0$. 
\end{lemma}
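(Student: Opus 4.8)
The plan is to cast Problem~\ref{prob:semi} as a system of ordinary differential equations for the coefficient vectors of $\rho_h$, $m_h$, $\theta_h$ in fixed bases of $Q_h$, $V_h$, $W_h$, and then invoke the Picard--Lindel\"of theorem. Introduce bases $\{q_i\}$, $\{v_j\}$, $\{w_k\}$ and write $\rho_h = \sum_i \mathsf{r}_i(t) q_i$, $m_h = \sum_j \mathsf{m}_j(t) v_j$, $\theta_h = \sum_k \mathsf{t}_k(t) w_k$, collecting these into a vector $\mathsf{y}(t) = (\mathsf{r},\mathsf{m},\mathsf{t}) \in \RR^N$. Testing the three equations against the basis functions turns the semi-discrete system into an implicit ODE of the form $A(\mathsf{y})\, \dot{\mathsf{y}} = F(\mathsf{y})$, where the entries of the mass-type matrix $A(\mathsf{y})$ and of $F(\mathsf{y})$ are integrals over $\omega$ of smooth algebraic expressions in the discrete fields and their spatial derivatives (e.g.\ $1/\rho_h$, $m_h/\rho_h^2$, $P_\rho(\rho_h,\theta_h)$, $Q(\theta_h)/\theta_h$, etc.).

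The key point is that $A(\mathsf{y})$ is invertible and $A(\cdot)^{-1}$, $F(\cdot)$ depend continuously (in fact locally Lipschitz, indeed smoothly) on $\mathsf{y}$ in a neighbourhood of the initial value. For this I would first record that, by the continuity of $\rho_{h,0},\theta_{h,0}$ and the hypotheses $\rho_{h,0}\ge\underline\rho>0$, $\theta_{h,0}\ge\underline\theta>0$, there is an open set $\mathcal{U}\subset\RR^N$ containing $\mathsf{y}(0)$ on which the corresponding $\rho_h,\theta_h$ stay uniformly bounded below by $\underline\rho/2,\underline\theta/2$ and above by some constant on the compact interval $\omega$; here finite-dimensionality is essential, since all norms on $Q_h$ and $W_h$ are equivalent, so closeness of coefficient vectors implies uniform closeness of the fields. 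On $\mathcal{U}$ all the nonlinear coefficient functions $1/\rho_h$, $1/\theta_h$, $P(\rho_h,\theta_h)$ and its derivatives, $Q(\theta_h)$, $e(\rho_h,\theta_h)$ are well defined and $C^1$ in $\mathsf{y}$ by the assumed regularity $P\in C^3(\RR_+^2)$, $Q\in C^2(\RR_+)$ together with the chain rule; hence $A$ and $F$ are $C^1$ on $\mathcal{U}$.

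Invertibility of $A(\mathsf{y})$ for $\mathsf{y}\in\mathcal{U}$ is the one genuinely structural step. The matrix is block lower-triangular up to the coupling through $1/\theta_h$: the first block (from $(\dt\rho_h,q_h)$) is the Gram matrix of $\{q_i\}$ in $L^2(\omega)$, hence positive definite; the second block acting on $\dot{\mathsf m}$ comes from $(\tfrac{1}{\rho_h}\dt m_h,v_h)$ and is the Gram matrix of $\{v_j\}$ in the weighted $L^2$-inner product with weight $1/\rho_h>0$, hence again positive definite; the third block acting on $\dot{\mathsf t}$ arises from $\rho_h \dt e_h \cdot w_h/\theta_h = \rho_h e_\theta(\rho_h,\theta_h)\,\dt\theta_h\cdot w_h/\theta_h$, and since $e_\theta = \theta s_\theta = Q_\theta - \theta P_{\theta\theta} \ge \underline c_v>0$ by the standing Assumption, this block is the Gram matrix of $\{w_k\}$ in the weighted inner product with the strictly positive weight $\rho_h e_\theta/\theta_h$, hence positive definite. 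The remaining entries of $A$ only couple earlier unknowns into later equations (the $\dt\rho_h$ terms appearing in the second and third equations), so after the triangular elimination $A(\mathsf{y})$ is invertible on $\mathcal{U}$ and $\mathsf{y}\mapsto A(\mathsf{y})^{-1}$ is $C^1$ there. Consequently $\dot{\mathsf y} = A(\mathsf{y})^{-1}F(\mathsf{y})=:G(\mathsf{y})$ with $G\in C^1(\mathcal U;\RR^N)$, and Picard--Lindel\"of yields a unique solution on some maximal interval; restricting to $[0,T]$ for $T>0$ small enough that the trajectory stays in $\mathcal U$ gives the claim, with $\rho_h,\theta_h$ remaining strictly positive there.

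\medskip

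\noindent The main obstacle is the verification of invertibility of $A(\mathsf y)$, i.e.\ correctly identifying the block structure of the mass matrix and checking that each diagonal block is a positive-definite weighted Gram matrix; once the weights $1/\rho_h$ and $\rho_h e_\theta/\theta_h = \rho_h(Q_\theta-\theta_h P_{\theta\theta})/\theta_h$ are recognized as strictly positive via the standing Assumption, the rest is a routine application of the ODE existence theory in finite dimensions.
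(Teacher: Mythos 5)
Your proposal is correct and follows essentially the same route as the paper: rewriting the Galerkin system as an implicit ODE for the coefficient vectors, observing that the mass matrix is block triangular with diagonal blocks given by weighted Gram matrices with weights $1$, $1/\rho_h$, and $\rho_h(Q_\theta-\theta_h P_{\theta\theta})/\theta_h$, which are strictly positive by the standing Assumption, and then applying Picard--Lindel\"of. Your identification of the third weight via $e_\theta = Q_\theta - \theta P_{\theta\theta} \ge \underline c_v$ is exactly the computation carried out in the paper's proof.
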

\begin{proof}
By definition of the discrete internal energy, we have  
\begin{align*}
e_h=e(\rho_h,\theta_h)=P(\rho_h,\theta_h) - \theta_h P_\theta(\rho_h,\theta_h) + Q(\theta_h).
\end{align*}
Differentiation with respect to time thus yields 
\begin{align*}
\dt  e_h 
&= (P_{\rho,h} - \theta_h P_{\theta\rho,h}) \dt \rho + (P_{\theta,h} - P_{\theta,h} - \theta P_{\theta\theta,h} + Q_{\theta,h}) \dt \theta_h \\
&= (P_{\rho,h} - \theta_h P_{\theta\rho,h}) \dt \rho + (Q_{\theta,h} - \theta P_{\theta\theta,h}) \dt \theta_h.
\end{align*}
After choosing a basis for $Q_h$, $V_h$, and $W_h$, the semi-discrete problem thus leads to a system of ordinary differential equations of the form 
\begin{align} \label{eq:ode}
M(Y) Y' + F(Y) = 0,
\end{align}
where $Y=(\hat \rho,\hat m,\hat \theta)$ denotes the coordinate vector for the functions $\rho_h$, $m_h$, and $\theta_h$. Due to our assumptions on $P$ and $Q$, the function $F$ and the matrix $M$ are continuously differentiable as long as $\rho_h$ and $\theta_h$ are strictly positive. Moreover, $M(Y)$ has block triangular form and therefore is regular, if the diagonal blocks are regular. The diagonal blocks, on the other hand, correspond to the mass matrices for the function spaces $V_h$, $Q_h$, and $W_h$ with weight functions $1$, $\frac{1}{\rho_h}$, and $\frac{\rho_h}{\theta_h}(Q_\theta(\theta_h) - \theta_h P_{\theta\theta}(\rho_h,\theta_h))$. By our assumptions on the potentials, these functions are strictly positive as long as $\rho_h$ and $\theta_h$ are strictly positive. Therefore, the matrix $M(Y)$ is regular if $\rho_h$ and $\theta_h$ are strictly positive. Local existence of a unique solution then follows from the Picard-Lindelöf theorem.
\end{proof}
\begin{remark}
By inspection of the proof, one can see that the solution can be extended uniquely in time as long as it remains bounded and $\rho_h$ and $\theta_h$ stay strictly positive. 
% One can therefore expect to obtain a global solution if the true density and temperature fields are strictly positive for all time.
\end{remark}

Due to the conforming Galerkin approximation and the particular form of the variational principle, conservation of mass, energy, and entropy also hold on the discrete level.
\begin{lemma} \label{lem:conservationh}
Let $(\rho_h,m_h,\theta_h)$ be a solution of Problem~\ref{prob:semi} with $\rho_h > 0$ and $\theta_h>0$. 
Then 
\begin{align*}
\frac{d}{dt}\int_\omega \rho_h dx=0, 
\qquad 
\frac{d}{dt} \int_\omega E_h dx = 0, 
\quad \text{and} \quad 
\frac{d}{dt} \int_\omega \rho_h s_h dx = 0. 
\end{align*}
Here $E_h = \frac{m_h^2}{2\rho_h} + \rho_h e(\rho_h,\theta_h)$ and $s_h = s(\rho_h,\theta_h)$ denote the discrete energy and entropy.
\end{lemma}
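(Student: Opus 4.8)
The plan is to mimic exactly the proof of Lemma~\ref{lem:conservation} on the continuous level, but now working entirely within the finite-dimensional spaces $Q_h$, $V_h$, $W_h$. The key point is that the conforming Galerkin approximation means every test function used in the continuous argument has a discrete counterpart that is an admissible test function in Problem~\ref{prob:semi}, so that the same algebraic cancellations go through verbatim. Since the constitutive relations $p_h=p(\rho_h,\theta_h)$, $e_h$, $s_h$, $h_h$ are defined by evaluating the \emph{same} functions of $(\rho,\theta)$ at the discrete solution, all the pointwise identities from Section~\ref{sec:prelim} (in particular \eqref{eq:dEdt}, \eqref{eq:drhoedt}, \eqref{eq:dsdt}, \eqref{eq:dsdx}, and the definition $h_h=(\rho_h P_h)_\rho-\theta_h P_{\theta,h}+Q_h$) continue to hold pointwise with subscript $h$ everywhere, because they are purely differential-algebraic consequences of the chain rule and the definitions, not of the PDE.

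First I would prove conservation of mass: the constant function $q_h\equiv 1$ lies in $Q_h$ (here one should note $1\in L^2(\omega)$ and, since we will use it as a test function, implicitly assume $1\in Q_h$ — this is automatic for the standard choices and is used already in the semidiscrete setting), so testing the first equation of Problem~\ref{prob:semi} with $q_h=1$ gives $\frac{d}{dt}\int_\omega \rho_h\,dx=(\dt\rho_h,1)=-(\dx m_h,1)=0$, the last equality because $m_h\in V_h\subset H_0(\div;\omega)$ vanishes on $\partial\omega$. Next, for energy I would expand $\frac{d}{dt}\int_\omega E_h\,dx=(\dt E_h,1)$ using the discrete analogue of \eqref{eq:dEdt}, which splits $\dt E_h$ into three groups of terms corresponding to $\dt\rho_h$, $\frac{1}{\rho_h}\dt m_h-\frac{m_h}{2\rho_h^2}\dt\rho_h$, and $\rho_h\dt e_h-\frac{p_h}{\rho_h}\dt\rho_h$. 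Then I test the three equations of Problem~\ref{prob:semi} with $q_h=(\rho_h P_h)_\rho-\theta_h P_{\theta,h}+Q_h=h_h$, $v_h=m_h$, and $w_h=\theta_h$ respectively; these are admissible because $m_h\in V_h$ and $\theta_h\in W_h$, while $h_h\in Q_h$ requires, as on the continuous level, that we are free to use $h_h$ as a test function in the (otherwise $L^2$-conforming) first equation — I would phrase this as: testing with these functions is legitimate since $h_h$, being a smooth function of $(\rho_h,\theta_h)$, lies in the relevant test space, matching the convention used in Lemma~\ref{lem:conservation}. Summing the three identities, every term cancels in pairs exactly as in the continuous proof, giving $\frac{d}{dt}\int_\omega E_h\,dx=0$.

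Finally, for entropy I would follow the continuous argument line by line: write $\frac{d}{dt}\int_\omega \rho_h s_h\,dx=(\dt\rho_h,s_h)+(\theta_h\rho_h\dt s_h,1/\theta_h)=(i)+(ii)$. For $(i)$, test the first equation with $q_h=s_h$ (again a smooth function of the discrete solution, hence admissible) and use the formula \eqref{eq:s} for $s$ together with integration by parts, using $m_h=0$ on $\partial\omega$, to get $(i)=(m_h,\frac1{\theta_h}\dx Q_h)+(\dx m_h,P_{\theta,h})$. For $(ii)$, use the discrete version of \eqref{eq:dsdt}, namely $\rho_h\theta_h\dt s_h=\rho_h\dt e_h-\frac{p_h}{\rho_h}\dt\rho_h$, and test the third equation of Problem~\ref{prob:semi} with $w_h=1$; expanding the divergence terms and integrating by parts exactly as in the continuous proof yields $(ii)=-(m_h,\frac1{\theta_h}\dx Q_h)-(\dx m_h,P_{\theta,h})$, so $(i)+(ii)=0$.

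The main obstacle — and really the only subtlety — is the legitimacy of using nonlinear functions of the discrete solution such as $h_h$, $s_h$, and $1/\theta_h$ (or products like $m_h/\theta_h$) as test functions: these are generically \emph{not} in the discrete spaces $Q_h$, $V_h$, $W_h$. The clean resolution is to observe, as the paper implicitly does in Lemma~\ref{lem:conservation}, that the identities are statements about the variational equations holding for \emph{all} admissible test functions in the continuous spaces $L^2(\omega)$, $H_0(\div;\omega)$, $H^1(\omega)$ — and the semidiscrete solution, being a genuine $C^1$-in-time family of smooth-in-space functions, produces residuals that we can pair against \emph{any} such function once we note that the Galerkin equations extend by density/smoothness to test functions built from the solution itself. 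In fact the more honest way to phrase the proof is: differentiate $\int_\omega\rho_h\,dx$, $\int_\omega E_h\,dx$, $\int_\omega\rho_h s_h\,dx$ directly, express the time derivatives $\dt\rho_h$, $\dt m_h$, $\dt\theta_h$ pointwise, and substitute — but the cancellation structure of the variational principle is precisely what makes this substitution collapse to zero, so in practice the bookkeeping is identical to testing with $h_h$, $m_h$, $\theta_h$, $1$. I would present the proof in the testing form, remarking that the required test functions are admissible because the variational identities of Problem~\ref{prob:semi} are understood to hold for all test functions in the respective Sobolev spaces for a $C^1$ semidiscrete solution, which is exactly the situation in Lemma~\ref{lem:conservation}.
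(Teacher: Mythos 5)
Your overall strategy is the same as the paper's (its proof of Lemma~\ref{lem:conservationh} simply repeats the argument of Lemma~\ref{lem:conservation} with the tests $q_h=1$, $v_h=m_h$, $w_h=\theta_h$, $w_h=1$ and the pairings against $h_h$ and $s_h$), and you correctly identified the one genuinely delicate point: $h_h$ and $s_h$ are not elements of the discrete test spaces. But the resolution you offer is wrong. A Galerkin solution satisfies the identities of Problem~\ref{prob:semi} \emph{only} for test functions in $Q_h\times V_h\times W_h$; the residual is orthogonal to these finite-dimensional spaces and to nothing more. There is no "extension by density/smoothness" to all of $L^2(\omega)$, $H_0(\div;\omega)$, $H^1(\omega)$ — $Q_h$ is finite-dimensional, and if the discrete equations held for arbitrary continuous test functions, $(\rho_h,m_h,\theta_h)$ would be an exact solution of the Euler system, which it is not. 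Your fallback ("express the time derivatives pointwise and substitute") fails for the same reason: the first discrete equation only says that $\dt\rho_h$ equals the $L^2$-projection $-\Pi_{Q_h}(\dx m_h)$, so $(\dt\rho_h,h_h)=-(\Pi_{Q_h}\dx m_h,h_h)$, which in general differs from $-(\dx m_h,h_h)$. Indeed, after the genuinely admissible tests $v_h=m_h$ and $w_h=\theta_h$ (resp.\ $w_h=1$), the energy and entropy rates reduce exactly to $(\dt\rho_h+\dx m_h,\,h_h)$ and $(\dt\rho_h+\dx m_h,\,s_h)$, so the conservation claims are \emph{equivalent} to the vanishing of precisely the quantities your argument does not control.

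The correct repair is structural rather than a density argument. First, note that the factors $1/\theta_h$ and $m_h w_h/\theta_h$ in the third equation belong to the form, not to the test space, so $w_h=\theta_h$ and $w_h=1$ pose no admissibility problem (one only needs $1\in W_h$, and $1\in Q_h$ for the mass balance; both hold for the spaces of Part IV). Second, the pairings with $h_h$ and $s_h$ are legitimate because for the mixed pair $Q_h=P_0(\Th)$, $V_h=P_1(\Th)\cap H_0(\div;\omega)$ one has $\dx V_h\subseteq Q_h$; together with $\dt\rho_h\in Q_h$ this upgrades the first discrete equation to the pointwise identity $\dt\rho_h+\dx m_h=0$, which may then be multiplied by any function, in particular by $h_h$ or $s_h$. (For completely general conforming subspaces one should add $1\in Q_h\cap W_h$ and $\dx V_h\subseteq Q_h$ as hypotheses; without them the cancellations $(\dt\rho_h+\dx m_h,h_h)=0$ and $(\dt\rho_h+\dx m_h,s_h)=0$ have no reason to hold.) With this replacement, the bookkeeping you describe — the discrete analogues of \eqref{eq:dEdt} and \eqref{eq:dsdt}, which are indeed pure chain-rule identities valid for the discrete fields, plus the tests listed above and integration by parts using $m_h=0$ on $\partial\omega$ — goes through exactly as you wrote it.
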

\begin{proof}
The assertions can be proved with literally the same arguments as already employed in the proof of Lemma~\ref{lem:conservation} on the continuous level. 
\end{proof}

\begin{remark}
The conservation of energy also allows to obtain appropriate bounds for the norms of $\rho_h$, $\theta_h$, and $m_h$. 
As a consequence, one can expect global existence of the solution, as long as $\rho_h$ and $\theta_h$ remain uniformly bounded away from zero. This may be used as a first step towards a complete convergence analysis of the proposed method.
\end{remark}

\section{Time discretization}  \label{sec:time} 

As a final step of our investigations for the flow on a single pipe, we now
discuss an appropriate discretization in time which preserves the underlying conservation laws as good as possible.
Given a time step $\tau>0$, we define $t^n = n \tau$ for $n \ge 0$, 
and we denote by 
\begin{align*}
\dtau d^n :=  \frac{d^n - d^{n-1}}{\tau} \quad \text{for } n \ge 0,
\end{align*}
the backward differences which are taken as approximations for the time derivatives. 
As will become clear below, adaptive time steps $\tau_n>0$ could be considered as well. 
\subsection{Fully discrete scheme}
For the time discretization of the Galerkin approximation stated in Problem~\ref{prob:semi}, 
we now consider the following implicit time stepping scheme.
\begin{problem}[Fully discrete method] \label{prob:full} % $ $\\
Set $\rho_h^0=\rho_{0,h}$, $m_h^0=m_{h,0}$, and $\theta_h^0=\theta_{0,h}$ with initial values as in Problem~\ref{prob:semi}.
For $n \ge 1$ find $(\rho_h^n,m_h^n,\theta_h^n)\in Q_h\times V_h \times W_h$, such that 
\begin{align*}
\left(\dtau \rho_h^n,q_h\right) 
  &+ \left(\dx m_h^n,q_h\right)=0,  \\
\left(\frac{1}{\rho_h^{n-1}} \dtau m_h^n - \frac{m_h^n}{2(\rho_h^{n})^2} \dtau \rho_h^n,v_h\right) 
  &-  \left(\frac{(m_h^n)^2}{2(\rho_h^n)^2} + (\rho P)_{\rho,h}^n,\dx v_h\right) \\
  & +  \left(\frac{m_h^n}{2(\rho_h^n)^2} \dx' m_h^n - P_{\theta,h}^n \dx \theta_h^n, v_h\right) = 0, \\
\left(\rho_h^{n-1} \dtau e_h^n - \frac{p_h^n}{\rho_h^n} \dtau \rho_h^n,\frac{w_h}{\theta_h^n}\right) &- \left(Q_h^n - \theta_h^n P_h^n, \dx \Big(m_h^n \frac{w_h}{\theta_h^n}\Big)\right) + \left(m_h^n P_h^n \dx \theta_h^n,\frac{w_h}{\theta_h^n}\right) = 0,
\end{align*}
holds for all test functions $q_h \in Q_h$, $v_h \in V_h$, and $w_h \in W_h$.
\end{problem}
As before $P_h^n=P(\rho_h^n,\theta_h^n)$ and similar expressions denote the corresponding functions evaluated at the discrete solutions $\rho_h^n$ and $\theta_h^n$. 
The local well-posedness of this fully discrete scheme can be obtained with similar arguments as that of the semi-discrete problem.
\begin{lemma}
Let $(\rho_h^{n-1},m_h^{n-1},\theta_h^{n-1})$ be given with $\rho_h^{n-1} >0$ and $\theta_h^{n-1} >0$. Furthermore, let the state equations be defined as in Section~\ref{sec:prelim}.
Then for $\tau>0$ sufficiently small, the system in Problem~\ref{prob:full} has a locally unique solution $(\rho_h^n,m_h^n,\theta_h^n)$ with $\rho_h^n>0$ and $\theta_h^n>0$.
\end{lemma}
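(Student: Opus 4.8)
The plan is to recast the fully discrete system at the fixed time level $n$ as a (nonlinear) finite-dimensional fixed-point problem for the unknown triple $(\rho_h^n,m_h^n,\theta_h^n)$ and to apply a contraction or degree argument, exactly in the spirit of the semi-discrete proof of Lemma~\ref{lem:wellposedh}. Writing $Y^{n-1}=(\hat\rho^{n-1},\hat m^{n-1},\hat\theta^{n-1})$ for the coordinate vectors of the given previous iterate and $Y=(\hat\rho,\hat m,\hat\theta)$ for the unknown, substituting $\dtau d^n = (d^n-d^{n-1})/\tau$ turns the three equations of Problem~\ref{prob:full} into a system of the schematic form $M(Y;Y^{n-1})\,(Y-Y^{n-1}) + \tau\,F(Y)=0$, or equivalently $Y = Y^{n-1} - \tau\,M(Y;Y^{n-1})^{-1}F(Y) =: \Phi(Y)$, where $M$ collects the ``mass-type'' bilinear forms with the weights $1$, $\tfrac{1}{\rho_h^{n-1}}$, and $\tfrac{\rho_h^{n-1}}{\theta_h^n}(Q_\theta(\theta_h^n)-\theta_h^n P_{\theta\theta}(\rho_h^n,\theta_h^n))$ on the diagonal blocks, and $F$ collects the remaining spatial-derivative and reaction terms. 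Note the one structural subtlety relative to the semi-discrete case: the weight in the third block now involves $\theta_h^n$ rather than $\theta_h^{n-1}$, so $M$ genuinely depends on the unknown; I will carry that dependence along explicitly.

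The argument then proceeds in four steps. First, fix a closed ball $B_R$ around $Y^{n-1}$ in coordinate space of radius $R$ (to be chosen) and observe that on $B_R$, provided $R$ is small enough depending only on $\underline\rho,\underline\theta$ and the $C^2/C^3$ bounds on $P,Q$ from Section~\ref{sec:prelim}, one has $\rho_h^n \ge \underline\rho/2 > 0$ and $\theta_h^n \ge \underline\theta/2 > 0$ pointwise on $\omega$ (equivalence of norms on the finite-dimensional spaces $Q_h,W_h$ makes the coordinate-to-pointwise control uniform). Second, on $B_R$ the diagonal blocks of $M(Y;Y^{n-1})$ are symmetric positive definite with a lower bound on the smallest eigenvalue depending only on $\underline\rho,\underline\theta,\underline c_v$ and the meshes, because each such block is a weighted Gram matrix with strictly positive weight; hence $M$ is invertible with $\|M^{-1}\|$ bounded uniformly on $B_R$, and $\Phi$ is well defined and continuous (in fact $C^1$) there. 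Third, $\|\Phi(Y)-Y^{n-1}\| = \tau\,\|M^{-1}F(Y)\| \le \tau\,C(R)$, so for $\tau \le R/C(R)$ the map $\Phi$ sends $B_R$ into itself; this already yields existence of a solution via Brouwer's fixed-point theorem. Fourth, for local uniqueness I estimate the Lipschitz constant of $\Phi$ on $B_R$: since $M$ and $F$ are $C^1$ with derivatives bounded on $B_R$, one gets $\mathrm{Lip}(\Phi|_{B_R}) \le \tau\,L(R)$, which is $<1$ once $\tau < 1/L(R)$; then Banach's fixed-point theorem gives the unique solution in $B_R$. Shrinking $\tau$ once more if necessary so that all of $B_R$ lies in the positivity region yields $\rho_h^n>0$ and $\theta_h^n>0$ as claimed.

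The main obstacle I anticipate is bookkeeping the dependence of the $M$-block weights on the \emph{current} unknowns $(\rho_h^n,\theta_h^n)$ — most visibly in the third equation, where the factor $\tfrac{1}{\theta_h^n}$ appears both inside $\dtau e_h^n$ (through $e_h^n=e(\rho_h^n,\theta_h^n)$ and the identity $\dt e_h = (P_\rho-\theta P_{\theta\rho})\dt\rho + (Q_\theta-\theta P_{\theta\theta})\dt\theta$ from the proof of Lemma~\ref{lem:wellposedh}, now with backward differences) and in the test weight $w_h/\theta_h^n$, and also in the divergence term $\dx(m_h^n w_h/\theta_h^n)$. One must verify that, after dividing through, the coefficient multiplying $\dtau\theta_h^n$ is still the strictly positive quantity $\tfrac{\rho_h^{n-1}}{\theta_h^n}(Q_\theta(\theta_h^n)-\theta_h^n P_{\theta\theta}(\rho_h^n,\theta_h^n)) \ge \tfrac{\rho_h^{n-1}\,\underline c_v}{\theta_h^n}$, so that the diagonal block stays uniformly positive definite on $B_R$; everything else (the $\dx$ terms, the $P_\theta\dx\theta$ reaction terms) is lower order in the sense that it contributes only to $F$, not to $M$, and is handled by the smallness of $\tau$. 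The remaining estimates are routine continuity/differentiability bounds that I would not spell out in detail, simply remarking that they follow from Assumption on the potentials and norm-equivalence in finite dimensions, exactly as in Lemma~\ref{lem:wellposedh}.
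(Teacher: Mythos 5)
Your proposal is correct, and the structural groundwork coincides with the paper's: recast the time step as an algebraic system $M(Y^n;Y^{n-1})\,\dtau Y^n + F(Y^n)=0$ and exploit that $M$ is block triangular with weighted mass matrices on the diagonal, invertible as long as density and temperature stay positive (Lemma~\ref{lem:wellposedh}). Where you differ is the closing argument. The paper multiplies by $\tau$, observes that at $\tau=0$ the system has the solution $Y^n=Y^{n-1}$ with Jacobian $M(Y^{n-1})$, which is regular by Lemma~\ref{lem:wellposedh}, and invokes the implicit function theorem; positivity for small $\tau$ then follows from continuous dependence of the solution branch on the step size. You instead build the fixed-point map $\Phi(Y)=Y^{n-1}-\tau M(Y;Y^{n-1})^{-1}F(Y)$ on a small ball $B_R$ chosen inside the positivity region and conclude by a contraction argument; this buys you positivity for free from the construction of $B_R$ and, in principle, an explicit admissible step-size restriction $\tau\le\min\{R/C(R),1/L(R)\}$, at the price of carrying the dependence of $M$ on the unknown $Y^n$ explicitly, which the paper sidesteps by evaluating at $\tau=0$ (Brouwer is redundant once you have the contraction, but harmless). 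One small imprecision: since $\dtau e_h^n$ is a backward difference of the nonlinear composition $e(\rho_h^n,\theta_h^n)$, the weight in the third diagonal block is not exactly $\tfrac{\rho_h^{n-1}}{\theta_h^n}\bigl(Q_\theta(\theta_h^n)-\theta_h^n P_{\theta\theta}(\rho_h^n,\theta_h^n)\bigr)$ but its mean-value (path-averaged) analogue obtained from the fundamental theorem of calculus along the segment between the old and new states; since that segment stays in the positive quadrant on $B_R$ and the Assumption gives $Q_\theta-\theta P_{\theta\theta}\ge\underline{c}_v$ there, the lower bound, and hence the uniform invertibility of $M$ on $B_R$, survives and your argument goes through as written.
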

\begin{proof}
With similar arguments as in the proof of Lemma~\ref{lem:wellposedh}, one can see that the problem for the $n$th time step can be formulated in algebraic form as 
\begin{align*} 
M(Y^{n};Y^{n-1}) \dtau Y^n + F(Y^n) = 0. 
\end{align*}
For $\tau=0$ we may set $M(Y^{n};Y^{n-1}) = M(Y^{n-1})$ with matrix $M(Y)$ as in the proof of Lemma~\ref{lem:wellposedh}. There it was shown that $M(Y)$ is regular under the assumptions of the lemma. 
Moreover, $M(Y;\tilde Y)$ and $F(Y)$ are continuously differentiable with respect to their arguments.  
Existence of a locally unique solution then follows by the implicit function theorem.
For sufficiently small $\tau>0$, we can deduce the positivity of $\rho_h^n$ and $\theta_h^n$ from their continuous dependence on the time step size.
\end{proof}

We comment in more detail on the size of the admissible time step below. 
Before, let us summarize the basic stability and conservation properties 
of the fully discrete scheme. 
\begin{lemma}[Conservation of mass, dissipation of energy, and non-decrease of entropy] \label{lem:conservationhh} $ $\\
Let $(\rho_h^n,m_h^n,\theta_h^n)_{n \ge 0}$ denote a positive solution of Problem~\ref{prob:full}.
Then 
\begin{align*}
\int_\omega \rho_h^n dx = \int_\omega \rho_h^k dx, 
\quad 
\int_\omega E_h^n dx \le \int_\omega E_h^k dx,
\quad \text{and} \quad 
\int_\omega \rho_h^n s_h^n dx \ge \int_\omega \rho_h^k s_h^k dx 
\end{align*}
for all $0 \le k \le n$. As before $E_h^n$ and $s_h^n$ denote the functions evaluated at $\rho_h^n$, $m_h^n$, and $\theta_h^n$.
\end{lemma}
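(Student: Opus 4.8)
The plan is to mimic the proof of Lemma~\ref{lem:conservation}, now exploiting the special structure of the implicit scheme together with the discrete product rule for backward differences. The key algebraic identity I would rely on is the following: for any sequence $(a^n)$ one has
\begin{align*}
\dtau(a^n b^n) = (\dtau a^n) b^n + a^{n-1} (\dtau b^n) = (\dtau a^n) b^{n} + a^n(\dtau b^n) - \tau (\dtau a^n)(\dtau b^n),
\end{align*}
and, in particular, for the square, $\dtau (a^n)^2 = 2 a^n \dtau a^n - \tau (\dtau a^n)^2$. These "remainder" terms of order $\tau$ are exactly what produces the one-sided inequalities for energy and entropy. Throughout I would only treat the step from $n-1$ to $n$; the general statement for $k \le n$ then follows by summing (or telescoping) over the intermediate steps.

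\textbf{Mass.} Test the first equation of Problem~\ref{prob:full} with $q_h = 1$; since $m_h^n \in V_h \subset H_0(\div;\omega)$ the term $(\dx m_h^n, 1)$ vanishes by the boundary condition, giving $\int_\omega \dtau \rho_h^n\,dx = 0$, i.e.\ $\int_\omega \rho_h^n\,dx = \int_\omega \rho_h^{n-1}\,dx$.

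\textbf{Energy.} I would first derive the fully discrete analogue of formula \eqref{eq:dEdt} for $\dtau E_h^n$, writing $E_h^n = \frac{(m_h^n)^2}{2\rho_h^n} + \rho_h^n e_h^n$ and applying the discrete product rule to both summands. The kinetic part expands, up to an $O(\tau)$ remainder with a sign, into a term of the form $\big(\frac{1}{\rho_h^{n-1}}\dtau m_h^n - \frac{m_h^n}{2(\rho_h^n)^2}\dtau\rho_h^n\big) m_h^n$ plus lower-order pieces; the internal-energy part contributes $h_h^n \dtau\rho_h^n$ and $\rho_h^{n-1}\dtau e_h^n - \frac{p_h^n}{\rho_h^n}\dtau\rho_h^n$, matching the three left-hand sides of Problem~\ref{prob:full} once one tests with $q_h = (\rho P)_{\rho,h}^n - \theta_h^n P_{\theta,h}^n + Q_h^n$, $v_h = m_h^n$, and $w_h = \theta_h^n$. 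The principal terms cancel exactly as in the proof of Lemma~\ref{lem:conservation}, and what is left is $\int_\omega \dtau E_h^n\,dx = -\tau \int_\omega R_h^n\,dx$, where $R_h^n$ collects the quadratic remainders, e.g.\ a multiple of $\frac{1}{\rho_h^{n-1}}(\dtau m_h^n)^2$ and similar squares weighted by the (positive) coefficients $\frac{\rho_h^{n-1}}{\theta_h^n}(Q_{\theta,h}^n - \theta_h^n P_{\theta\theta,h}^n)$ appearing in the mass matrix $M$. Positivity of these weights — which is exactly Assumption on the potentials and was already used in Lemma~\ref{lem:wellposedh} — gives $R_h^n \ge 0$, hence $\int_\omega E_h^n\,dx \le \int_\omega E_h^{n-1}\,dx$.

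\textbf{Entropy.} Analogously I would start from $\dtau(\rho_h^n s_h^n) = (\dtau \rho_h^n) s_h^n + \rho_h^{n-1} \dtau s_h^n$ and use the discrete counterpart of \eqref{eq:dsdt}, namely $\rho_h^{n-1}\theta_h^n \dtau s_h^n = \rho_h^{n-1}\dtau e_h^n - \frac{p_h^n}{\rho_h^n}\dtau\rho_h^n + \tau(\text{remainder})$, together with the discrete version of \eqref{eq:dsdx}. Testing the first equation with $q_h = s_h^n$ and the third with $w_h = 1$ and adding, the principal terms telescope to zero just as in Lemma~\ref{lem:conservation}, leaving $\int_\omega \rho_h^n s_h^n\,dx - \int_\omega \rho_h^{n-1} s_h^{n-1}\,dx = \tau \int_\omega \tilde R_h^n\,dx$ with $\tilde R_h^n$ again a sum of squares times the positive coefficient $\frac{\rho_h^{n-1}}{\theta_h^n}(Q_{\theta,h}^n - \theta_h^n P_{\theta\theta,h}^n)$ (plus a kinetic contribution that, after using the momentum equation, enters with the right sign). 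This yields the monotone increase $\int_\omega \rho_h^n s_h^n\,dx \ge \int_\omega \rho_h^{n-1} s_h^{n-1}\,dx$.

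\textbf{Main obstacle.} The routine parts are the bookkeeping of the $O(\tau)$ remainders; the genuinely delicate point is to verify that, after the principal cancellations, the leftover remainder terms in the energy and entropy balances really have a definite sign. This requires (i) identifying precisely which squares appear — in particular confirming that the mixed term coming from the momentum equation combines with the kinetic remainder into a perfect square or is otherwise controlled — and (ii) matching their coefficients with the diagonal blocks of the mass matrix $M$ from Lemma~\ref{lem:wellposedh}, whose positivity rests on the structural Assumption on $P$ and $Q$. The careful choice of evaluation points in Problem~\ref{prob:full} (e.g.\ $\rho_h^{n-1}$ versus $\rho_h^n$ in the various coefficients, and the use of $\dx' m_h^n$) is exactly what makes these signs come out consistently, so the argument must track those choices faithfully.
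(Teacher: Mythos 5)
Your mass argument and the overall strategy (test with $q_h=1$, $q_h=h_h^n$ resp.\ $s_h^n$, $v_h=m_h^n$, $w_h=\theta_h^n$ resp.\ $w_h=1$, let the principal terms cancel as in Lemma~\ref{lem:conservation}, and telescope over the steps) coincide with the paper. For the energy, your plan is essentially workable and close to the paper's, although your bookkeeping of the remainders is off: in the scheme the internal-energy part is treated \emph{exactly}, since $\dtau(\rho_h^n e_h^n)=e_h^n\dtau\rho_h^n+\rho_h^{n-1}\dtau e_h^n$ and the scheme is built around exactly these two pieces, so no squares weighted by $\frac{\rho_h^{n-1}}{\theta_h^n}(Q_{\theta,h}^n-\theta_h^n P_{\theta\theta,h}^n)$ appear there. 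The only sign-definite terms come from the kinetic energy, and the paper obtains them not from a bilinear product rule but from the convexity inequality $f(y^n)-f(y^{n-1})\le f'(y^n)(y^n-y^{n-1})$ applied to $f(\rho)=\frac{(m_h^n)^2}{2\rho}$ and $f(m)=\frac{m^2}{2\rho_h^{n-1}}$; your $\dtau(a^n)^2=2a^n\dtau a^n-\tau(\dtau a^n)^2$ is the special case for the second function, but the $1/\rho$ dependence genuinely needs convexity rather than a product rule.

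The genuine gap is in the entropy step. The inequality hinges on a discrete chain-rule \emph{inequality} replacing \eqref{eq:dsdt}, namely $\rho_h^{n-1}\big(s_h^n-s_h^{n-1}\big)\ \ge\ \tau\Big(\rho_h^{n-1}\dtau e_h^n-\frac{p_h^n}{\rho_h^n}\dtau\rho_h^n\Big)\frac{1}{\theta_h^n}$, and this is exactly the point your proposal leaves open: you assert that the remainder is "a sum of squares times the positive coefficient $\frac{\rho_h^{n-1}}{\theta_h^n}(Q_{\theta,h}^n-\theta_h^n P_{\theta\theta,h}^n)$, plus a kinetic contribution that, after using the momentum equation, enters with the right sign," but $s(\rho,\theta)$ is a nonlinear function of both variables, so no product-rule manipulation produces such a sum of squares, and the momentum equation plays no role whatsoever in the entropy balance (only the continuity equation with $q_h=s_h^n$ and the temperature equation with $w_h=1$ are used; there is no kinetic contribution). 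The paper closes this gap by writing $s_h^n-s_h^{n-1}$ as path integrals $\int_{\rho_h^{n-1}}^{\rho_h^n}s_\rho(\rho,\theta_h^n)\,d\rho+\int_{\theta_h^{n-1}}^{\theta_h^n}s_\theta(\rho_h^{n-1},\theta)\,d\theta$, inserting the thermodynamic relations \eqref{eq:srho}, and then using that $1/\theta$ and $1/\rho$ are decreasing, $p/\rho=\rho P_\rho$ is increasing in $\rho$, and $e_\theta=Q_\theta-\theta P_{\theta\theta}\ge\underline c_v>0$ — i.e.\ the structural assumptions on $P$ and $Q$, together with the specific evaluation points ($\rho_h^{n-1}$ versus $\rho_h^n$, $\theta_h^n$) built into Problem~\ref{prob:full}. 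Without an argument of this monotonicity/path-integration type, the sign of the entropy remainder is not established, so as it stands your proposal proves mass conservation and (essentially) the energy decay, but not the entropy inequality.
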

\noindent
We call the solution of the discrete problem \emph{positive}, if $\rho_h^k>0$ and $\theta_h^k>0$ for all $0 \le k \le n$.

\begin{proof}
It suffices to consider the case $k=n-1$; the results for $k < n-1$ follow by induction. 
Testing the first equation with $q_h=1$ yields the exact conservation of mass
\begin{align*}
\int_\omega \rho_h^n dx 
&= (\rho_h^n,1) = (\rho_h^{n-1},1) + \tau (\dtau \rho_h^n,1)
 = (\rho_h^{n-1},1) - \tau (\dx m_h^n,1) = \int_\omega \rho_h^{n-1} dx.
\end{align*}
In the last step we used here that $m_h^n=0$ on the boundary.\\
Next consider the energy balance: Using the definition of the total energy $E$, we obtain
\begin{align*}
\frac{1}{\tau} \big(E_h^n - E_h^{n-1}\big) 
&= \dtau E_h^n
 = \dtau \left(\frac{(m_h^n)^2}{2\rho_h^n}\right)
   + \left(e_h^n + \frac{p_h^n}{\rho_h^n}\right) \dtau \rho_h^n 
   + \left(\rho_h^{n-1} \dtau e_h^n - \frac{p_h^n}{\rho_h^n} \dtau \rho_h^n\right). % \\
% &= (i) + (ii) + (iii).
\end{align*}
The first term in this expression can be further expanded as
\begin{align*}
% (i) &= 
\frac{1}{\tau} \left(\frac{(m_h^n)^2}{2\rho_h^n} - \frac{(m_h^n)^2}{2\rho_h^{n-1}}\right)
   + \frac{1}{\tau}\left(\frac{(m_h^n)^2}{2\rho_h^{n-1}} - \frac{(m_h^{n-1})^2}{2\rho_h^{n-1}}\right) 
\le -\frac{(m_h^n)^2}{2(\rho_h^n)^2} \dtau \rho_h^n  + \frac{m_h^n}{\rho_h^{n-1}} \dtau m_h^n.
\end{align*}
In the last step, we used that for any convex differentiable function $f(y)$ there holds 
\begin{align*}
f(y^n) - f(y^{n-1}) \le f'(y^n) (y^n - y^{n-1}).
\end{align*}
This was applied here to the convex functions $f(\rho)=\frac{(m_h^n)^2}{2\rho}$ and $f(m) = \frac{m^2}{2\rho_h^{n-1}}$, respectively.
Substituting $e_h^n + \frac{p_h^n}{\rho_h^n} = h_h^n$ in the second term, 
we obtain after integration over $\omega$ that
\begin{align*}
&\int_\omega E_h^n dx - \int_\omega E_h^{n-1} dx 
=  \left( E_h^n-E_h^{n-1},1 \right) \\
&\le \tau \left(\frac{1}{\rho_{h}^{n-1}} \dtau m_h^n - \frac{(m_h^n)^2}{2(\rho_h^n)^2}\dtau \rho_h^n,m_h^n\right) 
 + \tau \left( \dtau \rho_h^n, h_h^n\right) + \tau \left(\rho_h^{n-1} \dtau e_h^n - \frac{p_h^n}{\rho_h^n} \dtau \rho_h^n,1\right).
 \end{align*}
The time differences can be further evaluated by testing the fully discrete problem with test functions
$q_h=h_h^n$, $v_h = m_h^n$, and $w_h=\theta_h^n$, similar as in the proof of Lemma~\ref{lem:conservation}.
One can then see again that the right hand side above sums up to zero. 

The change of the discrete entropy can finally be expressed as 
\begin{align*}
\int_\omega \rho_h^n s_h^n dx - \int_\omega \rho_h^{n-1} s_h^{n-1} dx
&= (\rho_h^n s_h^n,1) - (\rho_h^{n-1} s_h^{n-1},1) \\
&= \left(\rho_h^n - \rho_h^{n-1}, s_h^n\right) + \left(\rho_h^{n-1}, s_h^n - s_h^{n-1}\right) 
 = (i) + (ii).
\end{align*}
By similar arguments as in the proof of Lemma~\ref{lem:conservation}, 
the term (i) can be further evaluated using the first equation in the fully discrete problem with $q_h=s_h^n$, leading to
\begin{align*}
(i) 
&= \big(\rho_h^n - \rho_h^{n-1}, s_h^n\big) 
 = \tau \big(\dtau \rho_h^n,s_h^n\big) 
 = \tau \Big(m_h^n, \frac{1}{\theta_h^n}\dx Q(\theta_h^n)\Big) + \tau \big(\dx m_h^n, P_{\theta,h}^n\big).
\end{align*}
To estimate the second term, we start by deriving a discrete analogue of \eqref{eq:dsdt}.
Using integration along a suitable path and the thermodynamic relations \eqref{eq:entropy}, we obtain 
\begin{align*}
s_h^n - s_h^{n-1} 
&= s(\rho_h^n,\theta_h^n) - s(\rho_h^{n-1},\theta_h^{n-1})\\
&= \int_{\rho_h^{n-1}}^{\rho_h^n} s_\rho(\rho,\theta_h^n) d\rho 
+ \int_{\theta_h^{n-1}}^{\theta_h^n} s_\theta(\rho_h^{n-1},\theta) d\theta  \\
&=  \int_{\rho_h^{n-1}}^{\rho_h^n} \frac{1}{\theta_h^n}  e_\rho(\rho,\theta_h^n) d\rho 
+ \int_{\theta_h^{n-1}}^{\theta_h^n} \frac{1}{\theta} e_\theta(\rho_h^{n-1},\theta) d\theta
- \int_{\rho_h^{n-1}}^{\rho_h^n} \frac{1}{\theta_h^n} \frac{p(\rho,\theta_h^n)}{\rho} \frac{1}{\rho} d\rho.
\end{align*}
We can now use that $1/\theta$ is decreasing in $\theta$, $1/\rho$ is decreasing in $\rho$, 
and $\frac{p}{\rho}=\rho P_\rho$ is increasing in $\rho$. 
Moreover, these functions and also $e_\theta$ are non-negative. Hence we get
\begin{align*}
s_h^n - s_h^{n-1} 
&\ge \int_{\rho_h^{n-1}}^{\rho_h^n} \frac{1}{\theta_h^n} e_\rho(\rho,\theta_h^n) d\rho 
- \int_{\rho_h^{n-1}}^{\rho_h^n} \frac{1}{\theta_h^n} \frac{p(\rho_h^n,\theta_h^n)}{\rho_h^n}\frac{1}{\rho_h^{n-1}} d\rho
+ \int_{\theta_h^{n-1}}^{\theta_h^n} \frac{1}{\theta_h^n} e_\theta(\rho_h^{n-1},\theta) d\theta \\
&= \frac{1}{\theta_h^n} \big(e_h^n - e_h^{n-1}\big) - \frac{1}{\rho_h^{n-1}} \frac{1}{\theta_h^n} \frac{p_h^n}{\rho_h^n} \big(\rho_h^n - \rho_h^{n-1}\big).
\end{align*}
By testing the fully discrete variational principle with $w_h=1$, we thus obtain
\begin{align*}
(ii) &= (\rho_h^{n-1}, s_h^n - s_h^{n-1} ) 
\ge \tau \Big(\rho_h^{n-1} \dtau e_h^n - \frac{p_h^n}{\rho_h^n} \dtau \rho_h^n, \frac{1}{\theta_h^n}\Big) \\
&=\tau\Big(Q_h^n - \theta_h^n P_{\theta,h}^n,\dx \big(\frac{m_h^n}{\theta_h^n}\big) \Big) - \tau \Big(m_h^n P_h^n \dx \theta_h^n,\frac{1}{\theta_h^n}\Big) 
= -\tau \Big(\frac{\dx Q_h^n}{\theta_h^n}, m_h^n\Big) - \tau \big(P_{\theta,h}^n, \dx m_h^n\big).  
\end{align*}
A combination of the two estimates for (i) and (ii) now yields the entropy inequality. 
\end{proof}

\begin{remark}
The previous lemma shows that the total mass of the discrete solution is conserved exactly for all time, while the total energy may be slightly decreasing and the discrete entropy may be slightly increasing due to numerical dissipation caused by the implicit time stepping scheme. The fully discrete scheme is therefore energy and entropy stable, and in perfect agreement with the second law of thermodynamics. 
\end{remark}
\begin{remark}
The energy identity can be used to obtain bounds for the discrete solution. 
As long as $\rho_h^{n-1}$ and $\theta_h^{n-1}$ are bounded away from zero sufficiently well, the time step in the fully discrete scheme can hence be chosen reasonable large. 
For appropriate initial values, we therefore expect well-posedness of the scheme for a uniform time step $\tau>0$ and for all $n \ge 0$, which is also observed in our numerical tests. 
Let us note that, as a consequence of the numerical dissipation of the implicit time stepping scheme, 
the fully discrete solution will actually converge to a steady state on the long run.
\end{remark}

\section*{Part III: Generalizations}

In the following two sections, we briefly discuss some possible extensions of our approach to more general flow models. 
It will turn out that such generalizations can be incorporated very naturally and analyzed without difficulties.

\section{Non-homogeneous boundary conditions} \label{sec:boundary}

The case of a closed pipe which was considered in the previous section 
was convenient for the analysis and allowed us to establish global conservation laws for mass, energy, and entropy. 
This situation is however of minor practical relevance. 
We therefore discuss now the possibility to incorporate more general boundary conditions.
Motivated by transport of gas in pipelines, we always assume that the flow is sub-sonic in the whole pipe. 
Let $v_1$, $v_2$ denote the start and end point of the pipe $\omega=[v_1,v_2]$. 
Then $v \in \{v_1,v_2\}$ is called \emph{inflow} vertex, if $n(v) m(v) < 0$, 
and \emph{outflow} vertex, if $n(v) m(v) \ge 0$. Here
\begin{align*}
n(v_1)=-1 \qquad \text{and} \qquad n(v_2)=1, 
\end{align*}
plays the role of a normal vector at the boundary of the domain
and $m$ is the mass flux.
From the general theory of hyperbolic equations one can then deduce that in the sub-sonic regime, two boundary conditions have to be defined at an inflow boundaries, while only one condition has to be prescribed at an outflow boundary.
The typical situations that may occur are depicted in Figure~\ref{fig:boundary}.
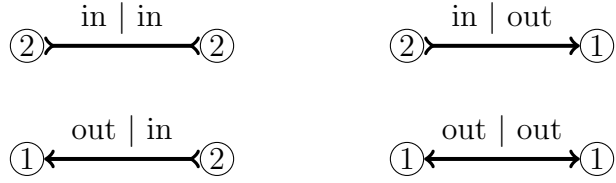
\begin{figure}[ht!]
\centering
\begin{tikzpicture}[scale=.5]
\node[circle,draw,inner sep=1pt] (v1a) at (0,3) {$2$};
\node[circle,draw,inner sep=1pt] (v2a) at (5,3) {$2$};
\node[circle,draw,inner sep=1pt] (v1b) at (10,3) {$2$};
\node[circle,draw,inner sep=1pt] (v2b) at (15,3) {$1$};
\node[circle,draw,inner sep=1pt] (v1c) at (0,0) {$1$};
\node[circle,draw,inner sep=1pt] (v2c) at (5,0) {$2$};
\node[circle,draw,inner sep=1pt] (v1d) at (10,0) {$1$};
\node[circle,draw,inner sep=1pt] (v2d) at (15,0) {$1$};
\draw[>-<,thick,line width=1.5pt] (v1a) -- node[above] {in $|$ in} ++ (v2a);
\draw[>->,thick,line width=1.5pt] (v1b) -- node[above] {in $|$ out} ++ (v2b);
\draw[<-<,thick,line width=1.5pt] (v1c) -- node[above] {out $|$ in} ++ (v2c);
\draw[<->,thick,line width=1.5pt] (v1d) -- node[above] {out $|$ out} ++ (v2d);
\end{tikzpicture}
\caption{Number of boundary conditions required for different flow situations. The closed pipe corresponds to two outflow vertices. \label{fig:boundary}}  
\end{figure}
The case of a closed pipe considered in the previous sections corresponds to that with two outflow boundaries here.
Let us emphasize that the choice of appropriate boundary conditions that give rise to a well-posed problem is not trivial;
an excellent review about appropriate choices can be found in \cite{Yee81}.

\medskip 

For later reference, let us briefly discuss one particular setting which will be utilized in our numerical tests below.
As before, let $\omega=[v_1,v_2]$ and  assume that 
\begin{align*}
m(v_1) = m_1^* > 0, \qquad \theta(v_1)=\theta_1^*, 
\qquad \text{and} \qquad m(v_2) = m_2^* > 0,
\end{align*}
with $m_1^*$ ,$m_2^*$, and $\theta_1^*$ given.
Here $v_1$ is the inflow boundary, where two conditions have to be prescribed, and $v_2$ is the outflow boundary, where only one condition is required. 
The following changes are needed in the variational principle \eqref{eq:var1}--\eqref{eq:var3}:

(i) We now have $m = m_0 + m^*$ with $m^0 \in H_0(\div)$ and $m^*$ given such that $m^*(v_i)=m^*_i$. 
The space for the corresponding test function $v$ does not have to be modified. 

(ii) To incorporate the additional boundary condition for the temperature, we can split 
$\theta=\theta_0 + \theta^*$ with $\theta_0 \in H_{0,1}^1 = \{w \in H^1(\omega) : w(v_1)=0\}$ and $\theta^*$ such that $\theta^*(v_1)=\theta^*_1$. The zero boundary condition at the inflow has also be required for the test function $w$. 

(iii) In the derivation of \eqref{eq:var3}, one obtains an additional boundary term 
\begin{align*}
(Q(\theta(v_2)) - \theta(v_2) P_\theta(\rho(v_2),\theta(v_2)) \frac{v(v_2)}{\theta(v_2)} m^*_2 % m(v_2) n(v_2)
\end{align*}
which comes from integration-by-parts and where we replaced $m(v_2)  n(v_2)$ by $m^*_2$. This term does not vanish here, since neither $v$ nor $m$ vanish at the vertex $v_2$. 

Similar modifications are also required for the Galerkin approximation and fully discrete scheme. 
These can however easily be implemented on the algebraic level.

Due to the non-homogeneous boundary conditions, conservation of mass, energy, and entropy do no longer hold here in general. The change in total mass is given by
\begin{align*}
\frac{d}{dt} \int_\omega \rho \;dx 
= \int_\omega \dt \rho \; dx
= -\int_\omega \dx m \; dx 
= m_2^* - m_1^*.
\end{align*}
The total mass will thus still be conserved if the mass fluxes at the in- and outflow are of the same size. Note that this identity is again satisfied exactly by the Galerkin semi-discretization and also by the fully discrete scheme.

\section{More general flow models} \label{sec:general}

We next discuss the extension of our approach to more general flow models including the effects of viscosity, friction, heat conduction, and heat transfer across the pipe walls. 
The corresponding generalizations of the compressible Euler equations then read
\begin{align}
\dt \rho + \dx m &= 0, \label{eq:euler1a} \\
\dt m + \dx \left(\frac{m^2}{\rho} + p\right) &= a \rho \dx \left(\frac{1}{\rho^2} \dx m\right) - b \frac{|m|m}{\rho},   \label{eq:euler2a}\\
\dt E + \dx \left(\frac{m}{\rho}(E+p)\right) &= a m \dx \left(\frac{1}{\rho^2} \dx m\right) - \frac{b}{\rho^2} |m|^3 + \dx (c \dx \theta) + d (\theta^* - \theta).   \label{eq:euler3a}
\end{align}
Here $a,b,c,d \ge 0$ denote the coefficients for viscosity, friction, heat conduction, and heat transfer, and $\theta^*$ is the temperature of the surrounding medium.

\begin{remark}
The particular form of the viscous term has been proposed in \cite{Egger16} in the context of isentropic flow. 
For a constant density $\rho = \bar \rho$ and corresponding flux $m = \bar \rho u$, we have $\rho \dx (\frac{1}{\rho^2} \dx m) = \dxx u$ which is the usual viscosity term arising in the compressible Navier-Stokes equations \cite{Feireisl03,Lions98,NovotnyStraskraba04}.
Also note that the first two terms on the right hand side of the energy equation are sometimes neglected in the literature on gas networks; see for instance \cite{BrouwerGasserHerty11,Osiadacz84}. 
They are however required to obtain a consistent formulation.
\end{remark}
For convenience of notation, we again consider a pipe which is closed at the ends, i.e., 
\begin{align}
m &= 0 \quad \text{and} \quad c \dx \theta=0 \qquad \text{at the boundary}. \label{eq:euler4a}
\end{align}
% The generalization to networks including the formulation of appropriate coupling conditions 
% will be discussed in the next section.
The incorporation of more general boundary conditions will be discussed below.
As a direct consequence of the above system of equations, we again obtain a balance equation for the entropy, which now reads
\begin{align} \label{eq:euler5a}
\dt (\rho s) + \dx (m s) &=  \frac{1}{\theta} \dx (c \dx \theta) + \frac{d}{\theta} (\theta^* - \theta).
\end{align}
With similar reasoning as in Section~\ref{sec:var}, we deduce the following variational principle.
\begin{lemma}[Variational characterization] $ $ \label{lem:vara}
Let the state equations be given as in Section~\ref{sec:prelim}. 
Then any smooth positive solution $(\rho,m,\theta)$ of problem \eqref{eq:euler1a}--\eqref{eq:euler4a}
also solves 
\begin{align}
(\dt \rho,q) &+ (\dx m,q) = 0,  \label{eq:var1a}\\
\Big(\frac{1}{\rho} \dt m - \frac{m}{2\rho^2} \dt \rho,v\Big) &- \Big(\frac{m^2}{2\rho^2} + (\rho P)_\rho,\dx v\Big) + \Big(\frac{m}{2\rho^2} \dx m, v\Big) - \big(P_\theta \dx \theta, v\big) \label{eq:var2a} \\
& \qquad \qquad \ \ = - \Big(\frac{a}{\rho^2} \dx m, \dx v\Big) - \Big(\frac{b|m|}{\rho^2} m, v\Big),  \notag\\
\left(\rho \dt e - \frac{p}{\rho} \dt \rho,\frac{w}{\theta}\right) &- \left(Q - \theta P_\theta, \dx \Big(m \frac{w}{\theta}\Big)\right) + \left(m P_\theta \dx \theta,\frac{w}{\theta}\right)  
 \label{eq:var3a} \\& \qquad \qquad \ \ 
=  -\Big(c \dx \theta, \dx \big(\frac{w}{\theta}\big)\Big) + \Big(d(\theta^* - \theta) ,\frac{w}{\theta}\Big), \notag
\end{align}
for all test functions $q \in L^2(\omega)$, $v \in H_0(\div;\omega)$, $w \in H^1(\omega)$, and all $t \ge 0$.
Vice versa, any smooth positive solution of \eqref{eq:var1a}--\eqref{eq:var3a} and \eqref{eq:euler4a} also solves problem \eqref{eq:euler1a}--\eqref{eq:euler3a}.
\end{lemma}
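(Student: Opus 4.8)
\emph{Proof proposal.} The plan is to follow the same two-step route used for Lemma~\ref{lem:variational}: first upgrade the generalized system \eqref{eq:euler1a}--\eqref{eq:euler3a} to an equivalent system of the type \eqref{eq:sys1}--\eqref{eq:sys3} that separately encodes the balances of mass, kinetic energy, and internal energy (now with additional source terms), and then pass to the weak form by multiplying with suitable test functions and integrating by parts.

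For the first step I would argue exactly as in the proof of Lemma~\ref{lem:equivalence}, carrying the new right-hand sides along. The mass equation is unchanged. Dividing \eqref{eq:euler2a} by $\rho$ and using \eqref{eq:dpdx} for the pressure term gives
\begin{align*}
\frac{1}{\rho}\dt m - \frac{m}{2\rho^2}\dt\rho
= -\dx\Big(\tfrac{m^2}{2\rho^2} + (\rho P)_\rho\Big) - \tfrac{m}{2\rho^2}\dx m + P_\theta\dx\theta
+ a\,\dx\Big(\tfrac{1}{\rho^2}\dx m\Big) - \tfrac{b|m|m}{\rho^2}.
\end{align*}
For the internal-energy balance I would start from the expansion \eqref{eq:dEdt} of $\dt E$, solve it for $\rho\dt e - \frac{p}{\rho}\dt\rho$, and substitute the right-hand side of \eqref{eq:euler3a} for $\dt E$, the displayed identity above (multiplied by $-m$) for the kinetic term, and $h\dt\rho=-h\dx m$ for the enthalpy term. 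The transport contributions recombine precisely as in Lemma~\ref{lem:equivalence}. The point to check is that the extra source terms collapse: the viscous contribution $a m\,\dx(\tfrac{1}{\rho^2}\dx m)$ from \eqref{eq:euler3a} is cancelled by $-m$ times the viscous term of the momentum identity, and $-\tfrac{b}{\rho^2}|m|^3$ from \eqref{eq:euler3a} is cancelled by $-m\cdot\big(-\tfrac{b|m|m}{\rho^2}\big)=\tfrac{b|m|^3}{\rho^2}$ (using $m^2=|m|^2$). Only the heat-conduction and heat-exchange terms survive, yielding
\begin{align*}
\rho\dt e - \frac{p}{\rho}\dt\rho
= -m\,\dx(Q-\theta P_\theta) - m P_\theta\dx\theta + \dx(c\,\dx\theta) + d(\theta^*-\theta),
\end{align*}
the expected generalization of \eqref{eq:sys3}; incidentally this reproduces the entropy balance \eqref{eq:euler5a} via \eqref{eq:dsdt}--\eqref{eq:dsdx}.

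In the second step I would multiply the three equations by $q$, $v$, and $w/\theta$ respectively (legitimate since $\theta>0$ for positive solutions), integrate over $\omega$, and integrate by parts in the terms involving $(\rho P)_\rho$ and $a\,\dx(\tfrac{1}{\rho^2}\dx m)$ on the momentum line, and $m\,\dx(Q-\theta P_\theta)$ and $\dx(c\,\dx\theta)$ on the energy line. All newly produced boundary contributions vanish under \eqref{eq:euler4a}: the term $[\tfrac{a}{\rho^2}(\dx m)\,v]_{\partial\omega}$ because $v\in H_0(\div;\omega)$, the term $[(Q-\theta P_\theta)\,m\,\tfrac{w}{\theta}]_{\partial\omega}$ because $m=0$ on $\partial\omega$, and the term $[c\,(\dx\theta)\,\tfrac{w}{\theta}]_{\partial\omega}$ because $c\,\dx\theta=0$ on $\partial\omega$ — here it is essential that the boundary condition controls the heat flux rather than $\theta$ itself, since $w$ need not vanish at $\partial\omega$. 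Collecting the identities gives \eqref{eq:var1a}--\eqref{eq:var3a}. The converse follows as in Lemma~\ref{lem:variational} by reversing each step: density of the test spaces recovers the strong form of the equivalent system, and the generalized equivalence from the first step then returns \eqref{eq:euler1a}--\eqref{eq:euler3a}.

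Since the manipulations are entirely parallel to Lemmas~\ref{lem:equivalence} and \ref{lem:variational}, the only genuinely delicate point — and the natural place for a sign error — is the bookkeeping of the new source terms in the energy equation, namely verifying the exact cancellation of the viscous and friction contributions when passing from the total-energy to the internal-energy formulation, together with checking that the heat-flux boundary term is annihilated by the correct component of \eqref{eq:euler4a}.
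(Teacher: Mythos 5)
Your proposal is correct and follows exactly the route the paper intends (the paper omits the proof, stating it is a minor modification of Lemma~\ref{lem:variational}, which itself rests on the equivalence argument of Lemma~\ref{lem:equivalence}): you carry the viscous, friction, heat-conduction, and heat-exchange terms through the pointwise reformulation, verify the cancellation of the viscous and friction contributions in the internal-energy balance, and check that the new boundary terms vanish via $v\in H_0(\div;\omega)$, $m=0$, and $c\,\dx\theta=0$ from \eqref{eq:euler4a}. No gaps; the sign bookkeeping you flag as the delicate point is indeed handled correctly.
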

The proof follows with minor modifications of that of Lemma~\ref{lem:variational} and is therefore omitted. 
Proceeding like in Section~\ref{sec:var}, we now obtain the following global balance laws.
\begin{lemma} \label{lem:conservationa}
Let $(\rho,m,\theta)$ be a smooth positive solution of \eqref{eq:var1a}--\eqref{eq:var3a} and \eqref{eq:euler4a}, and let the state equations be defined as in the Section~\ref{sec:prelim}. Then 
\begin{align*}
\frac{d}{dt} \int_\omega \rho dx = 0, \qquad 
&\frac{d}{dt} \int_\omega E dx = -\int_\omega \frac{a}{\rho^2} |\dx m|^2 + \frac{b}{\rho^2} |m|^3 + d (\theta - \theta^*) dx, \\
\quad \text{and}\qquad 
&\frac{d}{dt} \int_\omega \rho s dx = \int_\omega \frac{c}{\theta^2} |\dx \theta|^2 +  \frac{d}{\theta} (\theta^* - \theta) dx.
\end{align*}
\end{lemma}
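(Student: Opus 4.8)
The strategy is to mimic the proof of Lemma~\ref{lem:conservation} exactly, tracking the extra right-hand side contributions. Conservation of mass is unchanged: testing \eqref{eq:var1a} with $q=1$ and using $m=0$ at the boundary gives $\frac{d}{dt}\int_\omega \rho\,dx = -(\dx m, 1) = 0$.

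For the energy balance I would again start from the identity \eqref{eq:dEdt}, which is purely algebraic and still valid, so that
\begin{align*}
\frac{d}{dt}\int_\omega E\,dx
&= \Big(\tfrac{1}{\rho}\dt m - \tfrac{m}{2\rho^2}\dt\rho, m\Big) + \big(\dt\rho, (\rho P)_\rho - \theta P_\theta + Q\big) + \Big(\rho\dt e - \tfrac{p}{\rho}\dt\rho, \tfrac{1}{\theta}\Big).
\end{align*}
Now test \eqref{eq:var1a} with $q = (\rho P)_\rho - \theta P_\theta + Q$, test \eqref{eq:var2a} with $v = m$, and test \eqref{eq:var3a} with $w = \theta$. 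All the ``conservative'' terms cancel in pairs exactly as in Lemma~\ref{lem:conservation}; what survives is precisely the new right-hand side of \eqref{eq:var2a} evaluated at $v=m$, namely $-(\tfrac{a}{\rho^2}\dx m,\dx m) - (\tfrac{b|m|}{\rho^2}m,m)$, together with the new right-hand side of \eqref{eq:var3a} evaluated at $w=\theta$, namely $-(c\dx\theta,\dx 1) + (d(\theta^*-\theta),1)$. Since $\dx(1)=0$, the heat conduction term drops out and one is left with $\frac{d}{dt}\int_\omega E\,dx = -\int_\omega \tfrac{a}{\rho^2}|\dx m|^2 + \tfrac{b}{\rho^2}|m|^3 + d(\theta-\theta^*)\,dx$, as claimed.

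For the entropy balance I would follow the decomposition $\frac{d}{dt}\int_\omega \rho s\,dx = (\dt\rho, s) + (\theta\rho\dt s, \tfrac{1}{\theta}) = (i) + (ii)$ from the proof of Lemma~\ref{lem:conservation}. Term $(i)$ is handled exactly as before by testing \eqref{eq:var1a} with $q = s = \int_1^\theta \tfrac{Q_\theta(t)}{t}dt - P_\theta$, integrating by parts, and using $m=0$ at the boundary; this gives $(i) = (m, \tfrac{1}{\theta}\dx Q) + (\dx m, P_\theta)$. For $(ii)$, using $\rho\theta\dt s = \rho\dt e - \tfrac{p}{\rho}\dt\rho$ from \eqref{eq:dsdt} and testing \eqref{eq:var3a} with $w = 1$, the conservative terms again reproduce $-(\tfrac{1}{\theta}\dx Q, m) - (P_\theta, \dx m)$ which cancels $(i)$, while the new right-hand side of \eqref{eq:var3a} at $w=1$ contributes $-(c\dx\theta, \dx(\tfrac{1}{\theta})) + (d(\theta^*-\theta),\tfrac{1}{\theta})$. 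Since $\dx(\tfrac{1}{\theta}) = -\tfrac{1}{\theta^2}\dx\theta$, the first of these becomes $+(c\dx\theta, \tfrac{1}{\theta^2}\dx\theta) = \int_\omega \tfrac{c}{\theta^2}|\dx\theta|^2\,dx$, and altogether $\frac{d}{dt}\int_\omega \rho s\,dx = \int_\omega \tfrac{c}{\theta^2}|\dx\theta|^2 + \tfrac{d}{\theta}(\theta^*-\theta)\,dx$.

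\textbf{Main obstacle.} There is no deep difficulty here; the whole point of the variational formulation is that everything reduces to bookkeeping. The one place that requires a little care is the treatment of the heat-conduction terms: one must use the correct chain rule $\dx(w/\theta)$ when testing \eqref{eq:var3a}, observe that it collapses for the specific choices $w=\theta$ (giving $\dx 1 = 0$, so conduction does not affect energy) and $w=1$ (giving $-\tfrac{1}{\theta^2}\dx\theta$, so conduction produces the positive entropy term), and keep the boundary term $c\dx\theta=0$ from \eqref{eq:euler4a} in mind so that no boundary contributions appear. The friction and viscosity terms require only that one recognizes the test choice $v=m$ makes them manifestly of fixed sign.
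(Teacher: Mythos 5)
Your proposal is correct and follows exactly the route the paper intends: its proof of this lemma is a one-line reference to repeating the argument of Lemma~\ref{lem:conservation} with the same test functions ($q=1$; $q=(\rho P)_\rho-\theta P_\theta+Q$, $v=m$, $w=\theta$; $q=s$, $w=1$) while tracking the new right-hand side terms, which is precisely what you do. Your bookkeeping of the viscosity, friction, heat-conduction, and heat-exchange contributions matches the stated balances.
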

\begin{proof}
The assertions follow with minor modifications of the proof of Lemma~\ref{lem:conservation}.
\end{proof}

\begin{remark}
Note that in comparison with the Euler equations, the additional effects of viscosity, friction, and heat conduction lead to dissipation of energy and to increase in entropy, respectively. Depending on the sign of $\theta - \theta^*$, the heat transfer through the pipe walls may yield a positive or a negative contribution to the global energy and entropy.
\end{remark}

We can now further proceed in the very same manner as outlined for the Euler equations in the previous sections. 
We skip the details but briefly comment on the main arguments. 
\begin{remark}
For the semi-discretization in space, we may use a Galerkin approximation as discussed in Section~\ref{sec:galerkin}. The global balance laws for mass, energy, and entropy stated in Lemma~\ref{lem:conservationa} are again inherited literally. 
For the discretization in time, we can also utilize the same strategy as outlined in Section~\ref{sec:time};
the additional terms appearing in the right hand side of \eqref{eq:var2a}--\eqref{eq:var3a} are treated implicitly. 
The fully discrete solution then again satisfies global balance laws for mass, energy, and entropy similar as the ones stated in Lemma~\ref{lem:conservationa}, but with the equalities replaced by respective inequalities in the energy and entropy balances; compare with Lemma~\ref{lem:conservationhh}.
These results follow with minor modifications of the proofs of Lemma~\ref{lem:conservationh} and \ref{lem:conservationhh} and details are therefore left to the reader. 
 \end{remark}

\section*{Part IV: Numerical validation}

In the final part of our paper, we now illustrate the theoretical results by some numerical tests which demonstrate the stability and performance of the proposed discretization. 

For all our computations, we utilize a Galerkin approximation with mixed finite elements
that we briefly introduce next.
Let $[0,\ell_e]$ be the interval related to the edge $e$ and
let $T_h(e) = \{K\}$ be a uniform partition of $e$ into elements $K$ of length $h$. 
The global mesh is defined as $T_h(\E) = \{T_h(e) : e \in \E\}$.
Next we define spaces of piecewise polynomials by
\begin{align*}
 P_k(T_h(\E)) &= \{v \in L^2(\E) : v|_e \in P_k(T_h(e)) \ \forall e \in \E\}. 
\end{align*}
Here $P_k(T_h(e)) = \{v \in L^2(e) : v|_K \in P_k(K), \ K \in T_h(e)\}$ denotes the space of piecewise polynomials over the mesh $T_h(e)$, and $P_k(K)$ 
is the space of polynomials of degree $\le k$ on the subinterval $K$. 
We then seek approximations for the density $\rho$, the mass flux $m$, and the temperature $\theta$, 
in the finite element spaces
\begin{align*} % \label{eq:spaces}
Q_h = P_{0}(T_h(\E)) \cap L^2, 
\quad 
V_h = P_{1}(T_h(\E)) \cap H_0(\div), 
\quad \text{and} \quad 
W_h = P_1(T_h(\E)) \cap H^1.
\end{align*}
These spaces are known to have good approximation properties. The first two spaces have already been used successfully for the numerical approximation of damped wave propagation problems and the isentropic Euler equations on networks in \cite{EggerKugler16,Egger16}.

\section{Numerical tests} \label{sec:num}

For illustration of our theoretical considerations, 
we now present some numerical results for two simple test problems.
In both cases, we consider the case of an ideal gas which in our language can be modeled by the potentials
\begin{align*}
P(\rho,\theta)=R \theta \log \rho \qquad \text{and} \qquad Q(\theta)=c_v \theta. 
\end{align*}
By simple computations and the formulas of Section~\ref{sec:prelim}, we then obtain 
\begin{align} \label{eq:state_equations}
p= R \theta \rho,
\qquad
e=c_v \theta,
\qquad 
h=c_p \theta, 
\qquad \text{and} \qquad
s=c_v \log \theta - R \log \rho.
\end{align}
Here $R$ is the gas constant, $c_v$ is the specific heat at constant volume, and $c_p=R+c_v$ is the specific heat at constant pressure. For the following tests, we set $R=1$, $c_v=2.5$ which corresponds to $c_p=3.5$ and an adiabatic coefficient of $\gamma=c_p/c_v=1.4$.

\subsection{A shock tube problem}

As a first test scenario, we consider the Sod problem \cite{Sod78} which
is known to exhibit a shock wave, a rarefaction wave, and a contact discontinuity. 
This allows us to demonstrate the stability and performance of our discretization scheme in a rather general situation. 
The flow is described by the Euler equations \eqref{eq:euler1}--\eqref{eq:euler6} on a pipe $\omega=[-2.5,2.5]$. 
As initial values, we choose 
\begin{align*}
\rho_0=\begin{cases} 1, & x<0, \\ 3, & x \ge 0,\end{cases}
\qquad
m_0=\begin{cases} 0, & x<0, \\ 0, & x \ge 0,\end{cases}
\qquad
\theta_0=\begin{cases} 1, & x<0, \\ 1, & x \ge 0,\end{cases}
\end{align*}
which corresponds to the setting considered in \cite[Sec.~14.13]{LeVeque02}.
In Figure~\ref{fig:sod}, we depict the numerical solution for this test problem at time $T=1$ obtained with the mixed finite element method with discretization parameters $h=\tau=1/100$. 
\begin{figure}[ht!]
\centering
\includegraphics[width=0.75\textwidth]{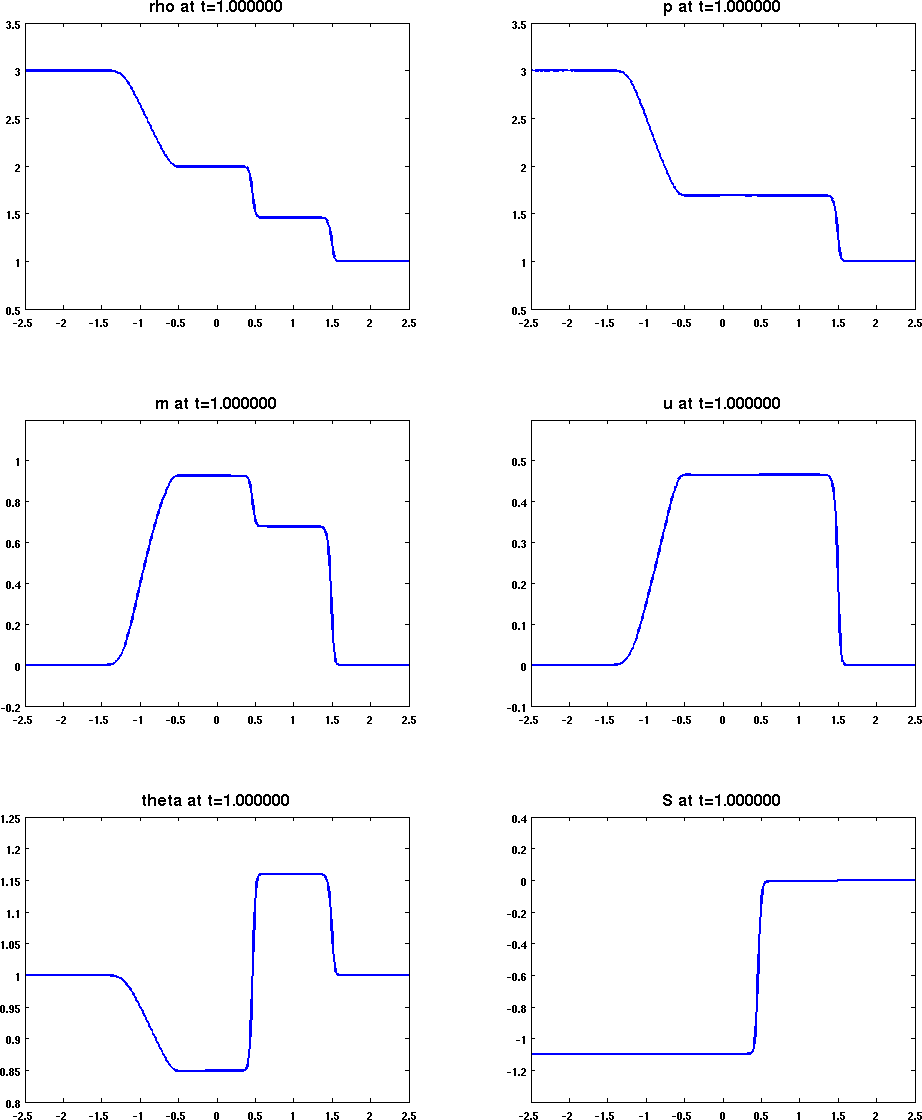}
\caption{Numerical solution for the Sod problem at time $T=1$ obtained with the mixed finite element method and discretization parameters $h=\tau=1/100$. The plots on the left correspond to the fields that are approximated directly in the numerical method. The plots on the right correspond to derived quantities. \label{fig:sod}} 
\end{figure}
In the left column, we depict the density $\rho$, the mass flux $m$, and the temperature $\theta$, which are the fields that are solved for in our numerical scheme. 
In all three solution components, one can see, from left to right, the rarefaction wave, the contact discontinuity, and the shock wave. The rarefaction wave leads to a smooth transition of the states, while the two latter features correspond to discontinuities which are slightly smeared out by the numerical scheme. The fronts however get sharper when further refining the mesh.  
The three pictures in the right column display the numerical approximations for the pressure $p$, the velocity $u$, and the specific entropy $s$, which are derived via \eqref{eq:state_equations} and the relation $m=\rho u$. 
As can be seen from the solution of the corresponding Riemann problem, the contact discontinuity does not appear in the pressure $p$ and velocity $u$; see \cite{Sod78} and \cite[Sec.13]{LeVeque02} for details.

As another validation of our theoretical results, let us also investigate the conservation of mass, energy, and entropy
in the numerical solution. For this, we repeat the previous test for different discretization parameters $h$ and $\tau$. 
The results are listed in Table~\ref{tab:sod}.
\begin{table}[ht!]
% results obtained with 50 inner iterations!
\begin{tabular}{c||r|r|r|r|r}
$h=\tau$                    & $1/20$     & $1/40$     & $1/80$     & $1/160$    & $1/320$ \\
\hline 
\hline 
$\triangle M_h$               & $ 0.0000$  & $ 0.0000$  & $ 0.0000$  & $ 0.0000$  & $ 0.0000$ \\
\hline
$\triangle E_h$               & $-0.0509$  & $-0.0400$  & $-0.0321$  & $-0.0268$  & $-0.0237$ \\
\hline
$\triangle S_h$               & $ 0.0797$  & $ 0.0549$  & $ 0.0384$  & $ 0.0276$  & $ 0.0207$
\end{tabular}
\medskip
\caption{Differences $\triangle M_h$, $\triangle E_h$, and $\triangle S_h$ in total mass, energy, and entropy 
of the numerical solution between time $t=1$ and $t=0$. \label{tab:sod}} 
\end{table}
As predicted by Lemma~\ref{lem:conservationhh}, the total mass is exactly conserved
on the discrete level.
Due to the numerical dissipation of the implicit time stepping scheme, the 
total energy is slightly decreasing and the total entropy is slightly increasing. 
The deviation from the exact conservation of energy and entropy, which is valid on the continuous and semi-discrete level, 
can be made smaller by decreasing the mesh size $h$ and the time step $\tau$. 
This can in fact already be observed when only the time step size $\tau$ is decreased.

\subsection{Gas transport through a pipe}

As a second test problem, we consider the transport of gas through a long pipeline. 
Here friction plays a major role for the dynamics and we also take into account 
heat exchange across the pipe walls.

The pipe is again modeled by the interval $\omega=[-2.5,2.5]$ and the evolution is now governed by 
the generalized flow model \eqref{eq:euler1a}--\eqref{eq:euler4a} with model parameters 
\begin{align*}
a=0, \qquad b=20, \qquad c=0, \qquad d=5, \quad \text{and} \quad  \theta^*=1.
\end{align*}
The state equations are chosen as before with parameters $\gamma=1.4$ and $R=1$. 
We assume that the fluid is at rest before $t=0$ and choose as initial conditions 
\begin{align*}
\rho_0=3, \qquad m_0=0, \qquad \text{and} \qquad \theta_0=1. 
\end{align*}
For time $t>0$, gas is injected at the left end and the same amount is drained at the right end of the pipe.
This is modeled by the boundary conditions
\begin{align*}
m=0.3, \quad  \theta=1.2 \ \text{ at } x=-2.5 \qquad \text{and} \qquad m=0.3 \ \text{ at } x=2.5.
\end{align*}
This setting corresponds to the one discussed in Section~\ref{sec:boundary}. 
From a simple dimension analysis one can deduce that the resulting flow is friction dominated and almost isothermal,
which is the typical setting observed in gas pipelines \cite{BrouwerGasserHerty11,Osiadacz84}. 
In Figure~\ref{fig:pipeline}, we display a few snapshots of the numerical solution
obtained with the fully discrete scheme based on the mixed finite element method with 
mesh size $h=1/100$ and time step $\tau=1/100$. 

\begin{figure}[ht!]
\centering
\includegraphics[width=0.65\textwidth]{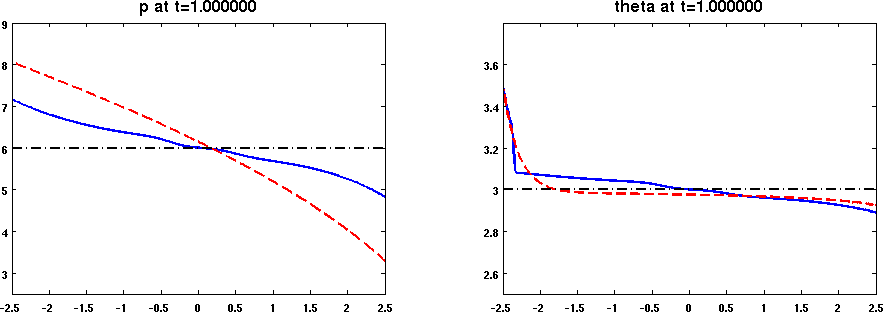} \\[1em]
\includegraphics[width=0.65\textwidth]{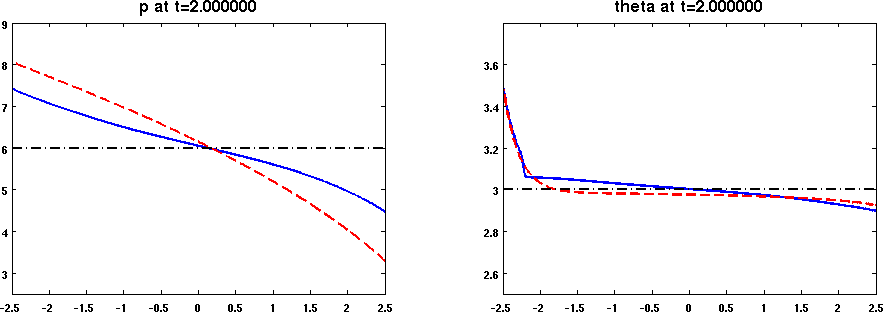} \\[1em]
\includegraphics[width=0.65\textwidth]{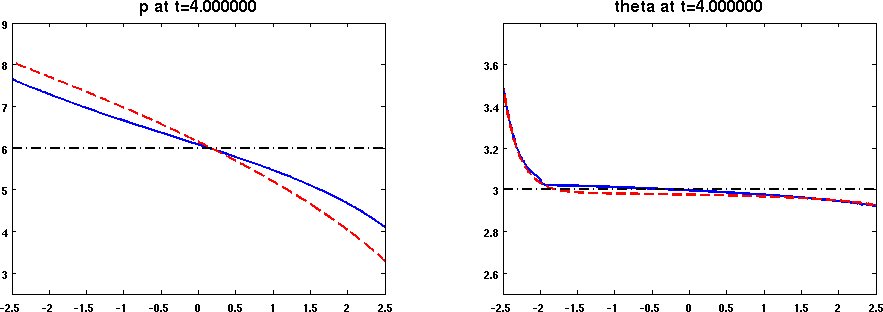} \\[1em]
\includegraphics[width=0.65\textwidth]{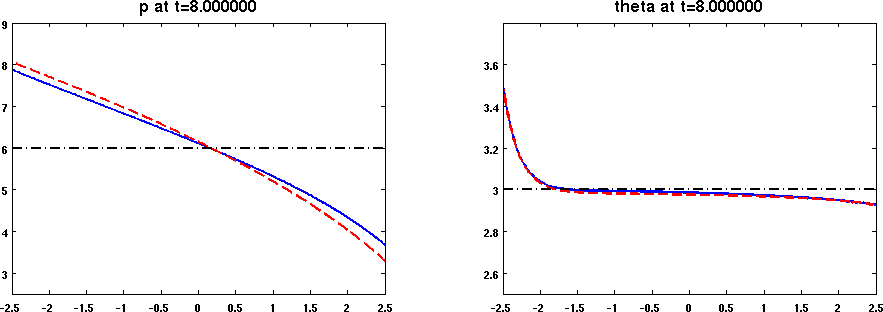} \\[1em]
\includegraphics[width=0.65\textwidth]{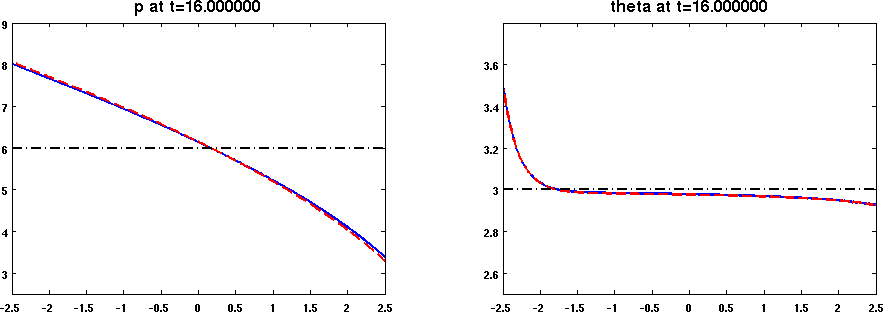} 
\caption{Initial values (black, dash-dot), steady states (red, dashed), and solution for the gas transport in a long pipeline at time $t=1,2,4,8,16$ (blue) obtained with the mixed finite element method and discretization parameters $h=\tau=1/100$. The plots on the left denote the pressure field and those on the right the temperature. \label{fig:pipeline}} 
\end{figure}

As a consequence of the damping and since the boundary conditions do not change with time, 
the system here converges to a new steady state $(\rho_\infty,m_\infty,\theta_\infty)$, 
governed by the corresponding stationary problem, which here reads
\begin{align*}
\dx m_\infty &= 0, \\
\dx \left(\frac{m_\infty^2}{\rho_\infty} + p(\rho_\infty,\theta_\infty) \right) &= -b \frac{|m_\infty|}{\rho_\infty} m_\infty, \\
\dx \left(m_\infty \left( \frac{m_\infty^2}{2\rho_\infty^2} + h(\rho_\infty,\theta_\infty)\right)\right) &= -b \frac{|m_\infty|^3}{\rho_\infty^2} + d (\theta^* - \theta_\infty). 
\end{align*}
In addition, we have 
\begin{align*}
m_\infty=0.3, \quad  \theta_\infty=1.2 \text{ at } x=-2.5 \qquad \text{and} \qquad m_\infty=0.3 \text{ at } x=2.5.
\end{align*}
Since $\dx m_\infty=0$, one of the boundary conditions is redundant here 
and therefore the stationary system does not completely determine the steady state. 
Due to conservation of mass, we however additionally get as another condition
\begin{align*}
\int_\omega \rho_\infty dx = \int_\omega \rho_0 dx 
\end{align*}
here which together with the previous equations uniquely determines the steady state. 

Due to exact conservation of mass on the discrete level, the discrete solution converges 
with $t \to \infty$ to an approximation of the correct steady which illustrates that the
exact conservation of mass is very important for the long-term behavior.
In Table~\ref{tab:pipeline}, we display the distance to steady state for the three solution components. 

\begin{table}[ht!]
% results obtained with 50 inner iterations!
\begin{tabular}{c||c|c|c|c|c|c}
$t$                 & $1$       & $2$       & $4$       & $8$       & $16$      & $32$ \\
\hline 
\hline 
$\triangle \rho$    & $0.6190$  & $0.4730$  & $0.3117$  & $0.1426$  & $0.0318$  & $0.0017$\\
\hline
$\triangle m$       & $0.3560$  & $0.1629$  & $0.0986$  & $0.0424$  & $0.0091$  & $0.0005$ \\
\hline
$\triangle \theta$  & $0.0916$  & $0.0719$  & $0.0422$  & $0.0183$  & $0.0041$  & $0.0002$
\end{tabular}
\medskip
\caption{Distances $\triangle \rho:=\|\rho_h(t)-\rho_{h,\infty}\|$, $\triangle m:=\|\rho_h(t)-\rho_{h,\infty}\|$, and 
$\triangle \theta:=\|\rho_h(t)-\rho_{h,\infty}\|$ to steady state for the three solution components. \label{tab:pipeline}} 
\end{table}
From the numerical results, one can deduce an exponential convergence to equilibrium. 
This could in principle be proven rigorously here by a linearized stability analysis; 
we refer to \cite{CoxZuazua94,EggerKugler15} for related results in a simplified setting.

\section{Discussion}

In this paper we proposed and analyzed the systematic discretization of compressible flow problems on a pipe
by Galerkin approximation in space and a problem adapted implicit time integration scheme. 
Exact conservation of mass, energy, and entropy could be proven for the semi-discretization of the Euler equations.
For the fully discrete scheme, exact conservation of mass is still preserved, while a slight decay in energy and a slight 
increase in entropy may be observed due to numerical dissipation of the implicit time stepping scheme. 
These properties and the stability of the scheme in the presence of shocks, rarefaction waves, and contact discontinuities was demonstrated by numerical results for a shock tube problem. For a problem involving high friction and heat exchange, which is the typical setting observed in gas transport through pipelines, we demonstrated the convergence to the correct quasi-steady state. 

The numerical method discussed in this paper is a generalization of the one for isentropic flow presented in \cite{Egger16}. 
There the method could be extended to problems on pipe networks, which is of relevance in the simulation of gas networks.
We were not successful yet to provide a corresponding extension to networks for problems and methods discussed in this paper.
One obstacle here is the formulation of appropriate coupling conditions at pipe junctions. Such extension are therefore left for future research.

\section*{Acknowledgements}
This work was supported by the German Research Foundation (DFG) via grants IRTG~1529 and TRR~154, and by the ``Excellence Initiative'' of the German Federal and State Governments via the Graduate School of Computational Engineering GSC~233.


\begin{thebibliography}{10}

\bibitem{BandaHertyKlar06}
M.~K. Banda, M.~Herty, and A.~Klar.
\newblock Gas flow in pipeline networks.
\newblock {\em Netw. Heterog. Media}, 1:41--56, 2006.

\bibitem{Batchelor}
G.~K. Batchelor.
\newblock {\em An introduction to fluid dynamics}.
\newblock Cambridge Mathematical Library. Cambridge University Press,
  Cambridge, 1999.

\bibitem{BressanEtAl15}
A.~Bressan, G.~Chen, Q.~Zhang, and S.~Zhu.
\newblock No {BV} bounds for approximate solutions to {$p$}-system with general
  pressure law.
\newblock {\em J. Hyperbolic Differ. Equ.}, 12:799--816, 2015.

\bibitem{BrouwerGasserHerty11}
J.~Brouwer, I.~Gasser, and M.~Herty.
\newblock Gas pipeline models revisited: Model hierarchies, non-isothermal
  models and simulations of networks.
\newblock {\em Multiscale Model. Simul.}, 9:601--623, 2011.

\bibitem{CockburnShu89c}
B.~Cockburn and C.-W. Shu.

\bibitem{CockburnShu89b}
B.~Cockburn and C.-W. Shu.
\newblock {TVB} {R}unge-{K}utta local projection discontinuous {G}alerkin
  method for conservation laws ii: General frame work.
\newblock {\em Math. Comp.}, 52:411--435, 1989.

\bibitem{ColomboMauri08}
R.~M. Colombo and C.~Mauri.
\newblock Euler system for compressible fluids at a junction.
\newblock {\em J. Hyperbol. Differ. Eq.}, 5:547--56, 2008.

\bibitem{CourantFriedrichs48}
R.~Courant and K.~O. Friedrichs.
\newblock {\em Supersonic flow and shock waves}.
\newblock Interscience Puplishers, Inc., New~York, 1948.

\bibitem{CoxZuazua94}
S.~Cox and E.~Zuazua.
\newblock The rate at which energy decays in a damped string.
\newblock {\em Comm. Part. Diff. Equat.}, 19:213--243, 1994.

\bibitem{Egger16}
H.~Egger.
\newblock A robust conservative mixed finite element method for compressible
  flow on pipe networks.
\newblock {\em arXive:1609.04988}, 2016.

\bibitem{EggerKugler15}
H.~Egger and T.~Kugler.
\newblock Uniform exponential stability of {G}alerkin approximations for damped
  wave systems.
\newblock {\em arXive:1511.08341}, 2015.

\bibitem{EggerKugler16}
H.~Egger and T.~Kugler.
\newblock Damped wave systems on networks: Exponential stability and uniform
  approximations.
\newblock {\em arXive:1605.03066}, 2016.

\bibitem{Feireisl03}
E.~Feireisl.
\newblock {\em Dynamics of Compressible Flow}.
\newblock Oxford Lecture Series in Mathematics and its Applications. Oxford
  University Press, Oxford, 2003.

\bibitem{GallouetEtAl16}
T.~Gallou{\"e}t, R.~Herbin, D.~Maltese, and A.~Novotny.
\newblock Error estimates for a numerical approximation to the compressible
  barotropic {N}avier-{S}tokes equations.
\newblock {\em IMA J. Numer. Anal.}, 36:543--592, 2016.

\bibitem{Garavello10}
M.~Garavello.
\newblock A review of conservation laws on networks.
\newblock {\em Netw. Heterog. Media}, 5:565--581, 2010.

\bibitem{Herty08}
M.~Herty.
\newblock Coupling conditions for networked systems of {E}uler equations.
\newblock {\em SIAM J. Sci. Comput.}, 30:1596--1612, 2008.

\bibitem{Karper13}
T.~K. Karper.
\newblock A convergent {FEM}-{DG} method for the compressible {N}avier-{S}tokes
  equations.
\newblock {\em Numer. Math.}, 125:441--510, 2013.

\bibitem{Kroener}
D.~Kr{\"o}ner.
\newblock {\em Numerical schemes for conservation laws}.
\newblock Wiley-Teubner Series Advances in Numerical Mathematics. John Wiley \&
  Sons, Ltd., Chichester; B. G. Teubner, Stuttgart, 1997.

\bibitem{LeVeque02}
R.~J. LeVeque.
\newblock {\em Finite volume methods for hyperbolic problems}.
\newblock Cambridge Texts in Applied Mathematics. Cambridge University Press,
  Cambridge, 2002.

\bibitem{Lions98}
P.-L. Lions.
\newblock {\em Mathematical Topics in Fluid Mechanics}.
\newblock Oxford Lecture Series in Mathematics and its Applications. Clarendon
  Press, Oxford, 1998.

\bibitem{MarcatiMilani90}
P.~Marcati and A.~Milani.
\newblock The one-dimensional {D}arcy's law as the limit of a compressible
  {E}uler flow.
\newblock {\em J. Diff. Equat.}, 84:129--147, 1990.

\bibitem{MarcatiRubino00}
P.~Marcati and B.~Rubino.
\newblock Hyperbolic to parabolic relaxation theory for quasilinear first order
  systems.
\newblock {\em J. Diff. Equat.}, 162:359--399.

\bibitem{MorinReigstad15}
A.~Morin and G.~A. Reigstad.
\newblock Pipe networks: coupling constants in a junction for the isentropic
  {E}uler equations.
\newblock {\em Energy Procedia}, 64:140--149, 2015.

\bibitem{NovotnyStraskraba04}
A.~Novotny and I.~Straksraba.
\newblock {\em Introduction to the Mathematical theory of compressible flow}.
\newblock Oxford Lecture Series in Mathematics and its Applications. Oxford
  University Press, Oxford, 2003.

\bibitem{Osiadacz84}
A.~Osiadacz.
\newblock Simulation of transient gas flows in networks.
\newblock {\em Int. J. Numer. Meth. Fluids}, 4:13--24, 1984.

\bibitem{Sod78}
G.~A. Sod.
\newblock A survey of several finite difference methods for systems of
  nonlinear hyperbolic conservation laws.
\newblock {\em J. Comput. Phys.}, 27:1--31, 1978.

\bibitem{Yee81}
H.~C. Yee.
\newblock Numerical approximation of boundary conditions with applications to
  inviscid equations of gas dynamics.
\newblock Technical report, 1981.
\newblock NASA TM-18265.

\bibitem{ZarnowskiHoff91}
R.~Zarnowski and D.~Hoff.
\newblock A finite-difference scheme for the {N}avier-{S}tokes equations of
  one-dimensional, isentropic, compressible flow.
\newblock {\em SIAM J. Numer. Anal.}, 28:78--112, 1991.

\bibitem{ZhaoHoff94}
J.~Zhao and D.~Hoff.
\newblock A convergent finite-difference scheme for the {N}avier-{S}tokes
  equations of one-dimensional, nonisentropic, compressible flow.
\newblock {\em SIAM J. Numer. Anal.}, 31:1289--1311, 1994.

\bibitem{ZhaoHoff97}
J.~Zhao and D.~Hoff.
\newblock Convergence and error bound analysis of a finite-difference scheme
  for the one-dimensional {N}avier-{S}tokes equations.
\newblock In {\em Nonlinear evolutionary partial differential equations
  (Beijing, 1993)}, volume~3 of {\em AMS/IP Stud. Adv. Math.}, pages 625--631,
  Providence, RI, 1997. AMS.

\end{thebibliography}
\end{document}